\definecolor{darkblue}{rgb}{0.0,0,0.7} 
\newcommand{\darkblue}{\color{darkblue}} 
\definecolor{darkred}{rgb}{0.7,0,0} 
\definecolor{lightgrey}{rgb}{0.7,0.7,0.7} 
\def\CC{\mathbb{C}}
\newcommand{\hilb}{\mathcal{P}}
\newcommand{\spn}{{\mathrm{span}}}
\newcommand{\fix}{{\mathrm{fix}}}
\newcommand{\g}{{G}}
\newcommand{\n}{{N}}
\newcommand{\UN}{{U^N}}
\newcommand{\UNs}{{U^N_\sigma}}
\newcommand{\UNsd}{{(U^N_\sigma)^*}}
\newcommand{\Vs}{{V^\sigma}}
\newcommand{\Vd}{{V^*}}
\newcommand{\V}{{V}}
\newcommand{\ENs}{{\mathcal{E}^N_\sigma}}
\newcommand{\EN}{{\mathcal{E}_N}}
\newcommand{\h}{{H}}
\newcommand{\VN}{{E}}
\newcommand{\VNd}{{E^*}}
\newcommand{\fixed}{{\mathrm{fix}}}
\newtheorem{theorem}{Theorem}[section]
\newtheorem{proposition}[theorem]{Proposition}
\newtheorem{corollary}[theorem]{Corollary}
\newtheorem{lemma}[theorem]{Lemma}
\theoremstyle{definition}
\newtheorem{definition}[theorem]{Definition}
\newtheorem{example}[theorem]{Example}
\newtheorem{remark}[theorem]{Remark}
\DeclareFontFamily{U}{rcjhbltx}{}
\DeclareFontShape{U}{rcjhbltx}{m}{n}{<->rcjhbltx}{}
\DeclareSymbolFont{hebrewletters}{U}{rcjhbltx}{m}{n}
\DeclareMathSymbol{\lamed}{\mathord}{hebrewletters}{108}
\newcommand{\defn}[1]{\emph{\darkblue #1}}
\newcommand\IfStringInList[2]{\IfSubStr{,#2,}{,#1,}}
\title[Normal Reflection Subgroups]
  {Normal Reflection Subgroups}
\author[C.~Arreche]{Carlos E. Arreche}
\address[C.~Arreche]{University of Texas at Dallas}
\email{arreche@utdallas.edu}
\author[N.~Williams]{Nathan F. Williams}
\address[N.~Williams]{University of Texas at Dallas}
\email{nathan.f.williams@gmail.com}
\date{\today}
\keywords{reflection group, normal subgroup, exterior algebra, exponents}
\subjclass[2000]{Primary 20F55; Secondary 05E10}
\begin{document}
\maketitle

\begin{abstract}We study normal reflection subgroups of complex reflection groups. Our point of view leads to a refinement of a theorem of Orlik and Solomon to the effect that the generating function for fixed-space dimension over a reflection group is a product of linear factors involving generalized exponents. Our refinement gives a uniform proof and generalization of a recent theorem of the second author.\end{abstract}

\section{Introduction}

Hopf proved that the (singular) cohomology of a real connected compact Lie group $\mathcal{G}$ is an exterior algebra on $\mathrm{rank}(\mathcal{G})$ generators of odd degree~\cite{hopf1964topologie}.  Its Poincar\'e series is therefore given by \[\mathrm{Hilb}(H^*(\mathcal{G});q) = \prod_{i=1}^n (1+q^{2e_i+1}).\]
Chevalley presented these $e_i$ for the exceptional simple Lie algebras in his 1950 address at the International Congress of Mathematicians~\cite{chevalley1950betti}, and Coxeter recognized them from previous work with real reflection groups~\cite{coxeter1951product}.  This observation has led to deep relationships between the cohomology of $G$, and the invariant theory of the corresponding Weyl group $W=N_\mathcal{G}(T)/T$~\cite{reeder1995cohomology,reiner2019invariant}---notably, $H^*(\mathcal{G}) \simeq \left(H^*(\mathcal{G}/T) \times H^*(T)\right)^W \simeq 􏰁\left(S/S_+^W \otimes \bigwedge V^*\right)^W.$
For more information, we refer the reader to the wonderful survey~\cite{barcelo1994combinatorial}.

It turns out that one method to compute the $e_i$ is the generating function for the dimension of the fixed space $\fixed(w)=\dim(\ker(1-g))$ over the Weyl group:
\[\sum_{w \in W} q^{\fixed(w)} = \prod_{i=1}^r (q+e_i).\]

Shephard and Todd~\cite{shephard1954finite} verified case-by-case that the same sum still factors when $W$ is replaced by a complex reflection group $G$. Let $G$ be a finite complex reflection group, acting by reflections on $V$.  The $e_i$ are now determined by the degrees $d_i$ of the fundamental invariants of $G$ on $V$ as $e_i=d_i-1$.  A case-free proof of this result was given by Solomon~\cite{solomon1963invariants}, mirroring Hopf's result: writing $S=\mathrm{Sym}(V^*)$ and $\Lambda=\bigwedge(V^*)$, $(\left(S \otimes \Lambda\right)^G$ is a free exterior algebra over the ring of $G$-invariant polynomials, which gives a factorization of the Poincar\'e series 

\begin{equation}
\label{eq:solomon}
\mathrm{Hilb}(\left(S \otimes \Lambda \right)^G;q,u) = \prod_{i=1}^r \frac{1+u q^{e_i}}{1-q^{d_i}}.
\end{equation}
Computing the trace on $S \otimes \Lambda$ of the projection to the $G$-invariants $\frac{1}{G}\sum_{g\in G} g$, specializing to $u=q(1-x)-1$, and taking the limit as $x \to 1$ gives the Shephard-Todd result:

\begin{theorem}[{\cite{shephard1954finite,solomon1963invariants}}]\label{thm:OS-untwisted}
For any complex reflection group $G$,
\begin{equation}
\label{eq:shephard_todd}
\sum_{g \in G} q^{\fixed(g)} = \prod_{i=1}^r (q+e_i).
\end{equation}
\end{theorem}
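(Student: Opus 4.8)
The plan is to derive \eqref{eq:shephard_todd} from Solomon's factorization \eqref{eq:solomon} by computing the bigraded Hilbert series of $(S\otimes\Lambda)^G$ a second way and then specializing. Since the graded trace of $g$ on $S=\mathrm{Sym}(V^*)$ is $1/\det\nolimits_{V^*}(1-qg)$ and the graded trace of $g$ on $\Lambda=\bigwedge(V^*)$ is $\det\nolimits_{V^*}(1+ug)$, averaging the character of $g$ over $G$ and equating with the right-hand side of \eqref{eq:solomon} gives the master identity
\[
\frac{1}{|G|}\sum_{g\in G}\frac{\det\nolimits_{V^*}(1+ug)}{\det\nolimits_{V^*}(1-qg)}\;=\;\prod_{i=1}^{r}\frac{1+uq^{e_i}}{1-q^{d_i}}.
\]
I would then substitute $u=q(1-x)-1$, so that $1+ug$ becomes $(1-g)+q(1-x)g$ and the $i$-th factor on the right becomes $\bigl((1-q^{e_i})+q^{d_i}(1-x)\bigr)/(1-q^{d_i})$, and study the limit $x\to1$.

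The point of this substitution is that $1-g$ vanishes identically on the fixed space $V^g$, so splitting $V^*$ into eigenspaces of $g$ gives
\[
\det\nolimits_{V^*}\!\bigl((1-g)+q(1-x)g\bigr)\;=\;\bigl(q(1-x)\bigr)^{\fixed(g)}\!\!\prod_{\lambda\neq 1}\!\bigl((1-\lambda)+q(1-x)\lambda\bigr),
\]
the product over the eigenvalues $\lambda\neq1$ of $g$ on $V^*$; dividing by $\det\nolimits_{V^*}(1-qg)=(1-q)^{\fixed(g)}\prod_{\lambda\neq1}(1-q\lambda)$ shows the $g$-summand has a zero of order exactly $\fixed(g)$ at $x=1$, with $q^{\fixed(g)}$ appearing as the coefficient of the leading power of $(1-x)$. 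To turn this into the desired generating function I would organize the sum by the fixed subspace $X=V^g$: the elements with $V^g=X$ are exactly those of the pointwise stabilizer $G^X$ fixing nothing beyond $X$, and by Steinberg's theorem $G^X$ is again a complex reflection group with its own degrees and generalized exponents. Inducting on the rank $r$ — the trivial group being the base case — the $x\to1$ asymptotics of the master identity should match, stratum by stratum, the fixed-point-free (``elliptic'') contribution of $G^X$ against the product over the exponents of $G^X$; summing the coefficient of $q^{\dim X}$ over all strata $X$ then collapses the right-hand side to $\prod_{i=1}^{r}(q+e_i)$.

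The hard part will be precisely this last bookkeeping. Since every summand with $\fixed(g)>0$ vanishes at $x=1$, the identity is not obtained by a naive termwise limit; one has to extract the subleading behaviour in $(1-x)$, which amounts to showing that the fixed-point-free counts of the parabolic subgroups $G^X$ assemble into the elementary symmetric functions of $e_1,\dots,e_r$. I expect the cleanest implementation runs the whole computation relative to the intersection lattice of the reflection arrangement and leans on the recursive structure of parabolics, rather than manipulating the rational function in $x$ head-on; verifying that the exponents of each $G^X$ interact correctly with those of $G$ is where the real content of the theorem lies.
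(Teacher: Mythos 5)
Your master identity is the right starting point, but the specialization you chose derails the argument. You substitute $u=q(1-x)-1$ while leaving the grading variable of $S$ equal to $q$, so for each unit eigenvalue of $g$ the numerator acquires a factor $q(1-x)$ with nothing to cancel it in the denominator $1-q$; as you yourself observe, every summand with $\fixed(g)>0$ then vanishes at $x=1$, and even the leading coefficient in $(1-x)^{\fixed(g)}$ is $q^{\fixed(g)}/(1-q)^{\fixed(g)}$ times a product over non-unit eigenvalues, not $q^{\fixed(g)}$. At that point you are forced into the stratification by fixed spaces, Steinberg's theorem, and an induction on rank, and you defer exactly the step where all the content lies (``showing that the fixed-point-free counts of the parabolic subgroups $G^X$ assemble into the elementary symmetric functions of $e_1,\dots,e_r$''). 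That program can be carried out, but it is not carried out here, so the proposal is not a proof.

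The specialization the paper intends couples the two variables. Write Solomon's identity with the symmetric-algebra variable called $x$,
\[
\frac{1}{|G|}\sum_{g\in G}\frac{\det\nolimits_{V^*}(1+ug)}{\det\nolimits_{V^*}(1-xg)}\;=\;\prod_{i=1}^{r}\frac{1+ux^{e_i}}{1-x^{d_i}},
\]
and substitute $u=q(1-x)-1$ into \emph{this}, then let $x\to1$. For an eigenvalue $\lambda=1$ of $g$ the corresponding factor is $q(1-x)/(1-x)=q$ identically, and for $\lambda\neq1$ it tends to $(1-\lambda)/(1-\lambda)=1$; so each summand has the finite, nonzero termwise limit $q^{\fixed(g)}$, the sum is finite, and no stratification or induction is needed. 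On the product side the $i$-th factor tends to $(q+e_i)/d_i$, and $\prod_i d_i=|G|$ clears the normalization, yielding \Cref{eq:shephard_todd} at once. This is precisely the computation the paper performs in \Cref{thm:twisted_os} and \Cref{cor:twisted_os} (there done coset-by-coset for $N\triangleleft G$; take $N=G$ and $g=1$ to recover the present statement), and the analogous coupled substitution $u=qt(1-x)-1$, $y=x^t$ drives the proof of the main theorem.
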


More generally, define the \defn{fake degree} of an $m$-dimensional (simple) $G$-module $M$ to be the polynomial encoding the degrees in which $M$ occurs in the coinvariant ring $\mathbb{C}[V]_G$: $f_M(q)=\sum_{i} (\mathbb{C}[V]_G^i,M)q^i =\sum_{i=1}^m q^{e_i(M)}.$

\medskip
Let $G \subset \mathrm{GL}(V)$ be a complex reflection group.  We say that $N \triangleleft G$ is a \defn{normal reflection subgroup} of $G$ if it is a normal subgroup of $G$ that is generated by reflections.  For Weyl groups, nontrivial normal reflection subgroups can be constructed using root lengths.   More generally, normal reflection subgroups are constructed by taking the union of conjugacy classes of reflections.    We give the classification of normal reflection subgroups in~\Cref{sec:classification}, and tie our work with previous work on their numerology in~\Cref{sec:reflexponents}.

The following theorem is a special case of results in~\cite{bessis2002quotients} (where the authors consider the more general notion of ``bon sous-groupe distingu\'e'' in lieu of our normal reflection subgroup $N$ of $G$). We emphasize that our proof of this result in \Cref{sec:quotients} follows the main ideas in~\cite{bessis2002quotients}, specialized to our more restricted setting.

\begin{theorem}
\label{thm:reflection_action}
Let $N$ be a normal reflection subgroup of a complex reflection group $G$ acting on $V$.  Then the quotient group $H=G/N$ acts as a reflection group on the vector space $\VN=V/N$.\end{theorem}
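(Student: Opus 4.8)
The plan is to work with the quotient variety $\VN = V/N = \operatorname{Spec}(S^N)$, where $S = \operatorname{Sym}(V^*)$, and to exhibit a natural embedding of $H = G/N$ into $\operatorname{GL}$ of an appropriate vector space together with enough pseudoreflections to generate. Since $N$ is generated by reflections, the Shephard--Todd--Chevalley theorem tells us $S^N$ is a polynomial ring $\mathbb{C}[f_1,\dots,f_n]$ on homogeneous generators $f_i$ of degrees $d_i^N$; thus $\VN$ is again (as a variety) an affine space $\mathbb{C}^n$, but the residual $G$-action is no longer linear in the $f_i$ unless we choose the invariants carefully. The first step I would take is to choose the basic invariants of $N$ to be compatible with the $G$-action: because $N \triangleleft G$, the group $G$ permutes the isotypic structure of $S$ over $N$, hence acts on the finite-dimensional space $S^N_+ / (S^N_+)^2$ of "indecomposable" $N$-invariants, which has dimension $n$. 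Call this space $\VNd$ — it is the cotangent space to $\VN$ at the origin — and let $\VN$ denote its dual. Since $N$ acts trivially on $S^N$, this representation of $G$ on $\VNd$ factors through $H = G/N$.

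The second step is to verify that the $H$-action on $\VN$ is a faithful reflection action. Faithfulness: if $h \in H$ acts trivially on $\VNd$, then it acts trivially on a generating set of $S^N$, hence trivially on all of $S^N$; but $S^N$ generates the fraction field $\mathbb{C}(V)^N$ over $\mathbb{C}$, and any lift $g \in G$ of $h$ then fixes $\mathbb{C}(V)^N$ pointwise, which (since $\mathbb{C}(V)/\mathbb{C}(V)^N$ is Galois with group $N$) forces $g \in N$, i.e. $h = 1$. For the reflection property, the cleanest route is to count: I would compute the generating function $\sum_{h \in H} q^{\fixed_{\VN}(h)}$ by relating fixed spaces on $\VN$ to fixed spaces on $V$. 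Concretely, for $g \in G$ with image $h$, the fixed subspace of $h$ on $\VN = V/N$ is the image of $\Fix_V(g N) := \bigcup_{n \in N}\Fix_V(gn)$, and one analyzes $\operatorname{codim}_{\VN}\Fix(h)$ via the well-known fact that $\VN/H \cong V/G$ is again affine space, so $H$ acting on $\VN = \mathbb{C}^n$ has $S^N$-invariant ring $S^G$ polynomial; by the converse direction of Shephard--Todd (a finite group whose invariant ring on $\mathbb{C}^n$ is polynomial is generated by pseudoreflections), $H$ is generated by pseudoreflections on $\VN$. This last implication is exactly the Chevalley--Shephard--Todd characterization and is the engine of the proof.

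The third step is to assemble these observations, being careful about one subtlety: Chevalley--Shephard--Todd gives that $H$ acts as a reflection group on $\VN$ once we know (a) the action is linear — which we arranged by choosing $\VNd = S^N_+/(S^N_+)^2$ and lifting the $f_i$ to an $H$-stable complement — and (b) the invariant ring $(S^N)^H = S^G$ is polynomial, which holds because $G$ itself is a reflection group. I would spell out (a) by noting that $G$ acts on the finite-dimensional graded piece-by-piece space $S^N_+$, preserves $(S^N_+)^2$, and semisimplicity of $\mathbb{C}[G]$-modules lets us choose a $G$-stable graded complement $\VNd \subset S^N_+$ mapping isomorphically to $S^N_+/(S^N_+)^2$; then $S^N = \operatorname{Sym}(\VNd)$ and the $G$-action on $S^N$ is induced from its linear action on $\VNd$, which descends to $H$.

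I expect the main obstacle to be step (a) done with full rigor: ensuring that the basic invariants of $N$ can be chosen $G$-equivariantly so that the residual $G = H$ action on $\VN$ is genuinely \emph{linear} (not merely polynomial). The potential pitfall is that an arbitrary choice of generators $f_i$ of $S^N$ gives a $G$-action by polynomial automorphisms of $\mathbb{C}^n$, and one must use the grading plus complete reducibility to rectify this to a linear action — and then check this rectified linear action still has $S^G$ as its invariant ring, so that Chevalley--Shephard--Todd applies. Everything else (faithfulness, the reduction to Chevalley--Shephard--Todd, the identification $\VN/H \cong V/G$) is essentially formal once the linear structure on $\VN$ is pinned down correctly.
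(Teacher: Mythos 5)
Your proposal is correct and follows essentially the same route as the paper: both choose the fundamental $N$-invariants so that their span is a $G$-stable graded complement realizing $S(V^*)^N_+/(S(V^*)^N_+)^2$ (linearizing the residual action via complete reducibility), identify $(S(V^*)^N)^H = S(V^*)^G$, and invoke the converse direction of Chevalley--Shephard--Todd. Your explicit faithfulness check via the Galois extension $\mathbb{C}(V)/\mathbb{C}(V)^N$ is a welcome detail the paper leaves implicit, but the argument is otherwise the same.
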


In \Cref{sec:quotients} we will build on our proof of \Cref{thm:reflection_action} to prove the following numerological identities.

\begin{theorem}
\label{thm:numbers}
Let $G,N,H$ be as in~\Cref{thm:reflection_action}. For a suitable choice of indexing of degrees and fake degrees, we have the following equalities:
\begin{align*}
e_i^N(V) {+} e_i^G(\VN) &= e_i^G(V) \\
d_i^N \cdot d_i^H & = d_i^G \\
d_i^N \cdot e_i^H(\VN) & = e_i^G(\VN)
\end{align*}
\end{theorem}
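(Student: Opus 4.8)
The plan is to reduce all three identities to a single computation---the generalized exponents of $\VN$ regarded as a $\g$-module---together with the classical fact that, for any complex reflection group, the generalized exponents of its reflection representation are its degrees minus one. Write $S=\mathrm{Sym}(V^*)$. By Chevalley--Shephard--Todd and \Cref{thm:reflection_action} the chain $S^\g\subseteq S^\n\subseteq S$ consists of polynomial rings; fix fundamental invariants $S^\g=\CC[f_1,\dots,f_r]$ and $S^\n=\CC[g_1,\dots,g_r]$ with $\deg f_i=d_i^\g$, $\deg g_i=d_i^\n$, and identify $\CC[\VN]=S^\n$, so that $\VNd=S^\n_+/(S^\n_+)^2$ is the reflection representation of $\h$ and $(S^\n)^\h=S^\g$. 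The first key step is a \emph{block decomposition}. The grading by degree in $S$ makes $\VNd=\bigoplus_j U_j$ a sum of $\h$-submodules, where $U_j$ is spanned by the images of the $g_i$ of degree $j$. I claim that a complex reflection group preserving a grading of its reflection representation is the direct product of the reflection groups it induces on the graded pieces: each reflection of $\h$ preserves the grading, hence acts trivially on all but one summand, on which it is a reflection, and since reflections generate $\h$, grouping them by summand exhibits $\h=\prod_j\h_j$ with $\h_j\subseteq\mathrm{GL}(U_j)$ a reflection group acting trivially on the other summands, $S^\n=\bigotimes_j\mathrm{Sym}(U_j)$, and $S^\g=(S^\n)^\h=\bigotimes_j\mathrm{Sym}(U_j)^{\h_j}$.

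This already yields the second identity: the fundamental invariants of $\g$ are the disjoint union over $j$ of those of $\h_j$ on $U_j$, so we may index them by pairs $(j,k)$; since every $g_i$ contributing to $U_j$ has $S$-degree $j$, the $(j,k)$-th invariant, of polynomial degree $d^{\h_j}_k$ on $U_j$, has $S$-degree $j\cdot d^{\h_j}_k$. Matching the $\dim U_j$ generators of $\n$ of degree $j$ with the pairs $(j,\cdot)$ and setting $d^\n_{(j,k)}:=j$, $d^\h_{(j,k)}:=d^{\h_j}_k$, $d^\g_{(j,k)}:=j\cdot d^{\h_j}_k$ realizes the three multisets of degrees and gives $d^\n_i\cdot d^\h_i=d^\g_i$.

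For the remaining two identities I would compute $f^\g_{\VNd}(q)$. Since $\n$ acts trivially on $\VNd$, every $\g$-map $\VNd\to\CC[V]_\g$ has image in $(\CC[V]_\g)^\n$, and averaging over $\n$ gives $(\CC[V]_\g)^\n=(S/S^\g_+S)^\n=S^\n/S^\g_+S^\n=\CC[\VN]_\h$ as graded $\h$-modules with the $S$-grading; hence $f^\g_{\VNd}(q)$ is the graded multiplicity of $\VNd$ in $\CC[\VN]_\h$. By the block decomposition $\CC[\VN]_\h=\bigotimes_j\CC[U_j]_{\h_j}$ and $\VNd=\bigoplus_j U_j$, so only the $j$-th tensor factor contributes to the multiplicity of $U_j$, and there $U_j$ occurs in polynomial degrees $d^{\h_j}_k-1$ (the classical $e=d-1$ fact applied to $\h_j$), i.e.\ in $S$-degrees $j(d^{\h_j}_k-1)$. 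Thus $f^\g_{\VNd}(q)=\sum_{j,k}q^{\,j(d^{\h_j}_k-1)}=\sum_i q^{\,d^\n_i(d^\h_i-1)}=\sum_i q^{\,d^\g_i-d^\n_i}$, so $e^\g_i(\VN)=d^\g_i-d^\n_i$. Combining with $e^\g_i(V)=d^\g_i-1$, $e^\n_i(V)=d^\n_i-1$, and $e^\h_i(\VN)=d^\h_i-1$ yields $e^\n_i(V)+e^\g_i(\VN)=(d^\n_i-1)+(d^\g_i-d^\n_i)=e^\g_i(V)$ and $d^\n_i\cdot e^\h_i(\VN)=d^\n_i(d^\h_i-1)=d^\g_i-d^\n_i=e^\g_i(\VN)$.

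The main obstacle I expect is the block decomposition together with the careful bookkeeping of the two gradings carried by $S^\n$---its intrinsic grading as $\CC[\VN]$, in which $\h$ acts and the degrees $d^\h_i$ are read off, versus the grading inherited from $S$, where $d^\g_i$ and $d^\n_i$ live---since it is exactly the rescaling by $d^\n_i=j$ between the two that produces the multiplicative factor $d^\n_i$ in the degree and exponent identities. A secondary technical point is fixing the conventions relating $V$ to $V^*$ and $\VN$ to $\VNd$ so that the classical statement ``exponents of the reflection representation equal degrees minus one'' and the asserted identities line up with the correct indexing.
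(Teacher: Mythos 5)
Your proposal is correct in outline and takes a genuinely different route from the paper. The paper never decomposes $H$; instead it chooses the fundamental invariants $\g_i(\mathbf{x})=\h_i(\mathbf{N})$ so that each $\h_i$ involves only $\n_j$'s of a single $\mathbf{x}$-degree $d_i^N$ (\Cref{rem:indexing}), reads off $d_i^N\cdot d_i^H=d_i^G$, obtains the first identity by applying the chain rule to $d\g_i=\sum_k\frac{\partial\h_i}{\partial\n_k}\,d\n_k$ and comparing $\mathbf{x}$-degrees, and obtains the third by identifying $(S(V^*)\otimes\VNd)^G$ with $S(V^*)^G\otimes U_H$ via \Cref{thm:solomon} applied to $H$, so that the free generators $d\h_i$ have $\mathbf{x}$-degree $d_i^N e_i^H(\VN)$. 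Your block decomposition $H=\prod_j H_j$ along the $\mathbf{x}$-graded pieces of $\VNd$ is a sharpened form of what \Cref{rem:indexing} asserts (it is exactly why the $\h_i$ can be chosen to involve $\n_j$'s of a single degree), and replacing Solomon's theorem by the classical $e=d-1$ fact applied blockwise to the coinvariant algebra $\mathbb{C}[\VN]_H=\bigotimes_j\mathrm{Sym}(U_j)_{H_j}$ is a legitimate and arguably more elementary alternative; the cost is the double-grading bookkeeping, which you track correctly, and your formulas check out against the $W(F_4)$ example.

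One genuine wrinkle remains in your third paragraph: you compute the graded multiplicity of $\VNd$ (equivalently, of each $U_j$) in the coinvariant algebra, but the quantity appearing in the theorem is $e_i^G(\VN)$, the fake degree of $\VN=E$, not of $\VNd=E^*$. For a general complex reflection group these differ (exponents versus coexponents), and the classical $e=d-1$ fact, applied to $H_j$ with polynomial ring $\mathrm{Sym}(U_j)$, gives the degrees in which $U_j^*$ occurs in $\mathrm{Sym}(U_j)_{H_j}$, not $U_j$. The repair is immediate and changes nothing downstream: run the identical argument for $\mathrm{Hom}_G(\VN,\mathbb{C}[V]_G)=\mathrm{Hom}_H(\VN,\mathbb{C}[\VN]_H)$ with $\VN=\bigoplus_j U_j^*$, so that only the $j$-th tensor factor contributes and does so in intrinsic degrees $d_k^{H_j}-1$, i.e.\ $\mathbf{x}$-degrees $j(d_k^{H_j}-1)$. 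You flagged exactly this $V$-versus-$V^*$ convention issue as a concern at the end; it does need to be resolved in the direction just indicated for the argument to prove the stated identities rather than their coexponent analogues.
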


\begin{example}
Take $G=W(F_4)=G_{28}$ and $N$ to be the normal subgroup generated by the reflections corresponding to short roots.  Then $N\simeq W(D_4)$, $G/N\simeq W(A_2)=\mathfrak{S}_3$ acting by reflections on $\CC \oplus \CC \oplus \CC^2$ (trivially on $\CC\oplus\CC$), so that \Cref{thm:numbers} gives the equations
\begin{align*}
(1,5,3,3) {+} (0,0,4,8) &= (1,5,7,11) \\
(2,6,4,4) \cdot (1,1,2,3) & = (2,6,8,12) \\
(2,6,4,4) \cdot (0,0,1,2) & = (0,0,4,8).
\end{align*}
\end{example}

The following result simultaneously generalizes the results of~\cite{williams2019reflexponents} and the Shephard-Todd formula~\Cref{eq:shephard_todd} from \Cref{thm:OS-untwisted}. We state a generalized version that incorporates Galois twists in \Cref{sec:galois}.

\begin{theorem}
\label{thm:main_theorem}
Let $N\triangleleft \ G$ be reflection groups acting by reflections on $V$, and let $\VN=V/N$. Then
\[\sum_{g \in G} q^{\fix_V g} t^{\fix_{\VN} g} = \prod_{i=1}^r \left(qt+e_i^N(V) t + e_i^G(\VN)\right).\]
\end{theorem}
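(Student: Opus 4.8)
The plan is to imitate the passage from Solomon's identity~\eqref{eq:solomon} to \Cref{thm:OS-untwisted}, but carried out one coset of $N$ at a time, using Solomon's theorem for $N$ acting on $V$ together with a graded-piece refinement of \Cref{thm:OS-untwisted} for $H$ acting on $\VN$. Since $\fix_{\VN}(g)$ depends only on the image $h=gN\in H$, I would first decompose
\[
\sum_{g\in G}q^{\fix_V g}\,t^{\fix_{\VN} g}=\sum_{h\in H}t^{\fix_{\VN}h}\,\Sigma_h(q),\qquad \Sigma_h(q):=\sum_{g\in hN}q^{\fix_V g},
\]
reducing the theorem to (i) evaluating each coset sum $\Sigma_h(q)$, and (ii) summing the result over $H$.

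For step (i): the per-element form of the substitution $u=q(1-x)-1$, $x\to 1$ used to prove \Cref{thm:OS-untwisted} is the elementary identity
\[
q^{\fix_V g}=\lim_{\epsilon\to0}\ \det\!\big(1+(-1+q\epsilon)\,g\big|_{V^*}\big)\Big/\det\!\big(1-(1-\epsilon)\,g\big|_{V^*}\big).
\]
Writing $g=hn$ with $n\in N$ and using that $N\triangleleft G$ — so $h$ commutes with the averaging idempotent $\tfrac1{|N|}\sum_{n\in N}n$, and $H$ acts on $(S\otimes\Lambda)^N$ — a relative Molien computation gives $\tfrac1{|N|}\sum_{n\in N}\frac{\det(1+u\,hn|_{V^*})}{\det(1-q'\,hn|_{V^*})}=\mathrm{tr}_{q',u}\!\big(h\mid (S\otimes\Lambda)^N\big)$. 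I would then apply Solomon's theorem to the reflection group $N$ on $V$: $(S\otimes\Lambda)^N$ is a free exterior algebra over $S^N$ on the classes of $df_1,\dots,df_r$, so as graded $H$-modules $(S\otimes\Lambda)^N\cong S^N\otimes\bigwedge\bar\Theta$, where $S^N=\CC[f_1,\dots,f_r]$ carries the grading by the degrees $d_j^N$ of the fundamental invariants of $N$, and the space $\bar\Theta$ of exterior generators satisfies $\bar\Theta_{d-1}\cong (\VN^*)_d$ as $H$-modules (the correspondence $f_j\mapsto df_j$, which is the source of the relation $e_i^G(V)=e_i^N(V)+e_i^G(\VN)$ of \Cref{thm:numbers}). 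Substituting this into the limit above — so that the symmetric part of $(S\otimes\Lambda)^N$ contributes $\det(1-q'^{\,d}h|_{(\VN^*)_d})$ and the exterior part contributes $\det(1+u\,q'^{\,d-1}h|_{(\VN^*)_d})$ — the factors $\prod_{\zeta\neq1}(1-\zeta)$ coming from the eigenvalues of $h$ on each $(\VN^*)_d$ cancel between numerator and denominator, leaving
\[
\Sigma_h(q)=|N|\prod_{d}\Big(\tfrac{q+d-1}{d}\Big)^{\dim(\VN_d)^h},
\]
the product over the distinct degrees $d$ occurring among the $d_j^N$, where $\VN=\bigoplus_d\VN_d$ is the decomposition into graded pieces; this specializes to $\Sigma_e(q)=\prod_i(q+e_i^N(V))$ as it must.

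For step (ii): since $t^{\fix_{\VN}h}=\prod_d t^{\dim(\VN_d)^h}$, the theorem now follows from the identity
\[
\sum_{h\in H}\ \prod_d w_d^{\ \dim(\VN_d)^h}=\prod_{j=1}^r\big(w_{d_j^N}+e_j^H(\VN)\big),
\]
a refinement of \Cref{thm:OS-untwisted} for $H$ acting on the graded space $\VN$. I would prove this either by the same Molien-and-limit device (now for $H$ on $\VN$), or directly: by \Cref{thm:reflection_action} the group $H$ acts as a genuine reflection group on $\VN$, so every reflection of $H$ is nontrivial on exactly one graded piece; decomposing $H$ into irreducible reflection factors together with a trivial summand whose coordinate directions are distributed among the $\VN_d$, the left-hand side factors over these factors, each governed by \Cref{thm:OS-untwisted}, and the exponents $e_j^H(\VN)$ distribute over the pieces as claimed (this is the indexing of \Cref{thm:numbers}). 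Finally, setting $w_d=t(q+d-1)/d$ and using $d_j^N-1=e_j^N(V)$, $\prod_j d_j^N=|N|$, and $d_j^N\,e_j^H(\VN)=e_j^G(\VN)$ from \Cref{thm:numbers} collapses the product to $\prod_{j=1}^r\big(qt+e_j^N(V)\,t+e_j^G(\VN)\big)$, as desired.

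The step I expect to be the real work is (i): establishing the $H$-equivariant structure of $(S\otimes\Lambda)^N$ — in particular the degree-shifted isomorphism $\bar\Theta_{d-1}\cong(\VN^*)_d$ and the bookkeeping that makes the symmetric/exterior cancellation go through — since this is precisely what encodes the splitting of the exponents of $V$ into $e_i^N(V)+e_i^G(\VN)$. The refinement in (ii), although it is the conceptual novelty emphasized in the abstract, should be comparatively routine once \Cref{thm:reflection_action} is available.
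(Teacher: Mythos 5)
Your proposal is correct, but it takes a genuinely different route from the paper's. The paper introduces a single trigraded Poincar\'e series $\hilb(x,y,u)$ for $(S(V^*)\otimes\bigwedge \VNd)^G$ (exterior algebra on the $N$-invariants $\VNd$, not on $V^*$), computes it once as a product via the bigraded Solomon decomposition of \Cref{rem:bigraded_solomon} (\Cref{lem:product_side}) and once as a Molien sum over $G$ (\Cref{thm:5.2}), and then specializes $y=x^t$, $u=qt(1-x)-1$, $x\to 1$; the coset-by-coset step (\Cref{prop:twisted_graded} and \Cref{cor:twisted_graded}) reconciles the specialized coset sums with $t^{\fix_{\VN}g}\sum_{n}q^{\fix_V(ng)}$ by invoking the twisted Shephard--Todd formula of Bonnaf\'e--Lehrer--Michel (\Cref{cor:twisted_os}, cited from \cite{bonnafe2006twisted}). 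You instead dispense with the global generating function: your step (i) re-derives that coset formula directly---your $\Sigma_h(q)=|N|\prod_d((q+d-1)/d)^{\dim(\VN_d)^h}$ is exactly \Cref{cor:twisted_os} after grouping the indices $i$ by the value of $d_i^N$ and using $|N|=\prod_i d_i^N$---via an $H$-equivariant Solomon decomposition of $(S\otimes\Lambda)^N$, which is precisely the content the paper outsources to \cite{bonnafe2006twisted}; and your step (ii) replaces the paper's product side with a graded refinement of \Cref{thm:OS-untwisted} for $H$ acting on $\VN$, proved by splitting $H$ into irreducible reflection factors (your observation that a reflection is nontrivial on exactly one graded piece is the key point: two reflections with root lines in distinct pieces $\VN_d$ commute, so each irreducible factor lives in a single $\VN_d$). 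Both arguments rest on the same two pillars---the $G$-stable choice of the $\n_i$ from \Cref{thm:reflection_action} and the identities $d_i^N-1=e_i^N(V)$, $d_i^N\,e_i^H(\VN)=e_i^G(\VN)$, $\prod_i d_i^N=|N|$ from \Cref{thm:numbers}---and your final assembly with $w_d=t(q+d-1)/d$ checks out. What your version buys is self-containedness (you prove rather than cite the twisted coset identity) and a more elementary product side; what it loses is the two-computations-of-one-series structure that lets the paper pass uniformly to the Galois-twisted generalization in \Cref{sec:galois}, where the Orlik--Solomon space $\UNs$ no longer coincides with $\VNd$ and the exterior-algebra bookkeeping in $\hilb(x,y,u)$ is doing essential work.
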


\begin{example}
\label{ex:c2}
The dihedral group $G=G(2,1,2)=\left\langle s,t | s^2=t^2=(st)^4=1\right\rangle$ acts as a reflection group on $V=\mathbb{C}^2$ by $s=\left[\begin{smallmatrix} -1 & 0 \\ 0 & 1 \end{smallmatrix} \right] \text{ and } t=\left[\begin{smallmatrix} 0 & 1 \\ 1 & 0 \end{smallmatrix} \right].$  Take $N$ to be the normal subgroup generated by the reflections conjugate to $s$.   Then $N \simeq C_2 \times C_2$ is a normal reflection subgroup, isomorphic to the direct product of the cyclic group of order two with itself, with invariants $N_1=x_1^2$ and $N_2=x_2^2$, and $G$ acts on $\VN$ dual to $\VNd=\mathrm{span}_\CC\{N_1,N_2\}$ as the quotient group $G/N\simeq C_2$ by $s=\left[\begin{smallmatrix} 1 & 0 \\ 0 & 1 \end{smallmatrix} \right] \text{ and } t=\left[\begin{smallmatrix} 0 & 1 \\ 1 & 0 \end{smallmatrix} \right].$  In this case, \Cref{thm:main_theorem} expresses the equality
\[\sum_{g \in G} q^{\fix_V g} t^{\fix_{\VN} g} = q^2t^2+2qt^2+2qt+2t+t^2 =(qt+t)(qt+t+2).\]
\end{example}

\noindent
{\bf Acknowledgements.} We thank Theo Douvropoulos for many helpful comments and suggestions. The first author was partially supported by NSF grant CCF-1815108. The second author was partially supported by Simons Foundation award number 585380. 

\section{Quotients by Normal Reflection Subgroups}\label{sec:quotients} 

Let $V$ be a complex vector space of dimension $r$. A \defn{reflection} is an element of $\mathrm{GL}(V)$ that fixes some hyperplane pointwise. A \defn{complex reflection group} $G$ is a finite subgroup of $\mathrm{GL}(V)$ that is generated by reflections. A complex reflection group $G$ is called \defn{irreducible} if $V$ is a simple $G$-module; $V$ is then called the \defn{reflection representation} of $G$. A \defn{(normal) reflection subgroup} of $G$ is a (normal) subgroup of $G$ that is generated by reflections.

Let $S(V^*)$ be the symmetric algebra on the dual vector space $V^*$, and write $S(V^*)^G$ for its $G$-invariant subring.  By a classical theorem of Shephard-Todd~\cite{shephard1954finite} and Chevalley~\cite{chevalley1955invariants}, a subgroup $G$ of $\mathrm{GL}(V)$ is a complex reflection group if and only if $S(V^*)^G$ is a polynomial ring, generated by $r$ algebraically independent homogeneous $G$-invariant polynomials---the \defn{degrees} $d_1\leq \cdots \leq d_r$ of these polynomials are invariants of $G$.

\begin{theorem}[\cite{shephard1954finite,chevalley1955invariants}]
\label{thm:shephard_todd}
  Let $G \leq \mathrm{GL}(V)$ be finite.  Then $G$ is a complex reflection group if and only if there exist $r$ homogeneous algebraically independent polynomials $\g_1,\ldots,\g_r\in S(V^*)^G$ such that $S(V^*)^G=\CC[\g_1,\ldots,\g_r]$.  In this case, $|G|=\prod_{i=1}^r d_i$, where $d_i=\mathrm{deg}(\g_i)$.
\end{theorem}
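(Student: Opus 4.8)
The plan is to prove the two implications separately, reading off the order formula from Molien's series once polynomiality is established. Write $S=S(V^*)$ and let $R=\tfrac1{|G|}\sum_{g\in G}g$ be the Reynolds operator, an $S^G$-linear projection $S\to S^G$. The substantive implication is that if $G$ is generated by reflections then $S^G$ is a polynomial ring, and I would prove this following Chevalley. First choose homogeneous generators $f_1,\dots,f_k$ of the Hilbert ideal $A:=S\cdot S^G_+$ that are minimal, in the sense that no $f_i$ lies in the ideal generated by the others. A short induction on degree using $R$ shows $S^G=\CC[f_1,\dots,f_k]$: if $p\in S^G_+$ is homogeneous then $p\in A$, so $p=\sum a_if_i$ with $a_i\in S$, and applying $R$ gives $p=\sum R(a_i)f_i$ with each $R(a_i)$ an invariant of strictly smaller degree. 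So everything reduces to showing the $f_i$ are algebraically independent, after which $S^G$ is polynomial and $k$ equals the transcendence degree of $S^G$ over $\CC$, which equals that of $S$ (since $S$ is a finite $S^G$-module by Noether's theorem), namely $r$.

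The heart of the argument, and the one place the hypothesis on $G$ enters, is the lemma that whenever $\sum_i g_if_i=0$ with the $g_i\in S$ homogeneous, every $g_i$ lies in $A$. I would prove this by induction on the common degree of the terms $g_if_i$, using the elementary fact that if $s$ is a reflection fixing the hyperplane $\ker(\alpha_s)$ with $\alpha_s\in V^*$, then for any $p\in S$ the polynomial $p-s(p)$ vanishes on $\ker(\alpha_s)$ and is hence divisible by $\alpha_s$, say $p-s(p)=\alpha_s\cdot\partial_s(p)$ with $\deg\partial_s(p)=\deg p-1$. Applying $s$ to the relation (the $f_i$ being $G$-invariant) and subtracting yields $\alpha_s\sum_i\partial_s(g_i)f_i=0$, hence $\sum_i\partial_s(g_i)f_i=0$ is a relation of strictly smaller degree; by induction $\partial_s(g_i)\in A$, so $g_i-s(g_i)\in A$ for every reflection $s$. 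Since $G$ is generated by reflections and $A$ is $G$-stable, this forces $g_i\equiv R(g_i)\pmod A$; if $\deg g_i>0$ then $R(g_i)\in S^G_+\subseteq A$, and if some $g_i$ were a nonzero constant then $f_i$ would lie in the ideal generated by the other $f_j$ (apply $R$ once more), contradicting minimality.

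Granting the lemma, algebraic independence follows by the classical Jacobian argument: if some nonzero homogeneous $P\in\CC[y_1,\dots,y_k]$ vanished at $(f_1,\dots,f_k)$, I would take such $P$ of minimal degree, observe that the invariants $P_i:=(\partial P/\partial y_i)(f_1,\dots,f_k)$ are not all zero (characteristic $0$) and by minimality are generated as an ideal of $S^G$ by a minimal subset $P_1,\dots,P_\ell$, differentiate $P(f)=0$ with respect to each $x_m$ via the chain rule and substitute $P_j=\sum_{i\le\ell}c_{ji}P_i$ to obtain relations $\sum_{i\le\ell}P_i\,Q_{im}=0$ with $Q_{im}\in S$, apply the lemma to get $Q_{1m}\in A$ for every $m$, and then sum $x_mQ_{1m}$ over $m$ and use Euler's identity and a degree count to conclude that $f_1$ lies in the ideal generated by the remaining $f_p$ — which after applying $R$ contradicts minimality. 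This gives $S^G=\CC[f_1,\dots,f_r]$.

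For the order formula and the remaining implication I would argue as follows. Once $S^G$ is known to be polynomial with generators of degrees $d_1,\dots,d_r$, expanding Molien's formula $\Hilb(S^G;q)=\tfrac1{|G|}\sum_{g\in G}\det(1-qg)^{-1}=\prod_i(1-q^{d_i})^{-1}$ around $q=1$ reads off $|G|=\prod_i d_i$ from the pole of order $r$ and $\sum_i(d_i-1)=N$, the number of reflections in $G$, from the next coefficient (here reflections pair up as $s\leftrightarrow s^{-1}$). Finally, if $S^G$ is polynomial, let $G'\trianglelefteq G$ be the subgroup generated by all reflections of $G$; by the implication just proved $S^{G'}$ is polynomial with degrees $e_1,\dots,e_r$, and $G'$ and $G$ have exactly the same reflections, so $\sum_i(e_i-1)=N=\sum_i(d_i-1)$. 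Since $S^{G'}$ is a finite Cohen–Macaulay module over the polynomial ring $S^G$ it is graded free, so $\prod_i(1-q^{d_i})/(1-q^{e_i})=\Hilb(S^{G'};q)/\Hilb(S^G;q)$ is a polynomial in $q$ with value $[G:G']$ at $q=1$, constant term $1$, and degree $\sum_i d_i-\sum_i e_i=0$ — forcing it to be the constant $1$, so $G=G'$ is generated by reflections. The main obstacle is the reflection lemma of the second paragraph: it is the only step that uses the defining property of $G$, and it is what powers the Jacobian argument; the two Hilbert-series computations are then essentially bookkeeping.
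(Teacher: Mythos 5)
This is one of the classical results the paper quotes from \cite{shephard1954finite,chevalley1955invariants} without proof, so there is no argument of the authors' to compare yours against; I can only assess your proposal on its own terms. What you have written is the standard Chevalley--Shephard--Todd argument, and it is essentially correct: the Reynolds-operator induction showing that homogeneous ideal generators of the Hilbert ideal $A=S\cdot S^G_+$ generate $S^G$ as an algebra, the key syzygy lemma proved by induction on degree using the operators $\partial_s(p)=(p-s(p))/\alpha_s$ attached to reflections, the Jacobian/minimal-relation argument for algebraic independence, the Molien-series expansion at $q=1$ giving $|G|=\prod_i d_i$ and $\sum_i(d_i-1)=N$, and the Hilbert-series comparison between $S^G$ and $S^{G'}$ (for $G'$ the subgroup generated by the reflections of $G$) for the converse. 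One point of precision you should fix: the lemma you state concerns relations $\sum_i g_if_i=0$ among the chosen generators $f_i$ of $A$, but in the Jacobian step you apply it to the relation $\sum_{i\le\ell}P_iQ_{im}=0$ among the invariants $P_i=(\partial P/\partial y_i)(f_1,\dots,f_k)$, which are not the $f_i$. You need the lemma in its usual, slightly more general form: for homogeneous invariants $h_1,\dots,h_\ell$ with $h_1$ not in the $S^G$-ideal generated by $h_2,\dots,h_\ell$, any homogeneous relation $\sum_i q_ih_i=0$ forces $q_1\in A$. The inductive proof you sketch goes through verbatim for this statement, so this is a matter of stating the right lemma rather than a genuine gap; with that adjustment your proof is the standard one and is sound at the level of detail offered.
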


Although \Cref{thm:reflection_action} is a special case of results in~\cite{bessis2002quotients} (where they consider the more general notion of ``bon sous-groupe distingu\'e''), the proof is more straightforward in our restricted setting, and also leads directly to a proof of \Cref{thm:numbers}.

{
\renewcommand{\thetheorem}{\ref{thm:reflection_action}}
\begin{theorem}
Let $N$ be a normal reflection subgroup of a complex reflection group $G$ acting on $V$.  Then the quotient group $H=G/N$ acts as a reflection group on the vector space $\VN=V/N$.
\end{theorem}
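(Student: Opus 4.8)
The plan is to realize $\VN = V/N$ as a genuine linear representation of $H = G/N$ and then read off the statement from the Shephard--Todd--Chevalley criterion (\Cref{thm:shephard_todd}) applied to its invariant ring. Since $N$ is generated by reflections, \Cref{thm:shephard_todd} tells us that $R := S(V^*)^N = \CC[\n_1,\dots,\n_r]$ is a polynomial ring on homogeneous fundamental invariants $\n_i$ of degrees $d_i^N$, graded by the grading inherited from $S(V^*)$. Because $N \triangleleft G$, the group $G$ preserves $R$ inside $S(V^*)$, and $N$ acts trivially on $R$; hence $H$ acts on $R$ by graded algebra automorphisms. Now $R_+ = \bigoplus_{k > 0} R_k$ and $R_+^2$ are graded $H$-submodules, so---$H$ being finite---I would choose a graded $H$-stable complement $\VNd \subseteq R_+$ to $R_+^2$, i.e. pick the fundamental invariants $\n_1,\dots,\n_r$ to span an $H$-submodule $\VNd$ of $R_+$. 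Setting $\VN := (\VNd)^*$ gives a linear $H$-representation of dimension $r$, and the algebra map $S(\VNd) \to R$ extending the inclusion $\VNd \hookrightarrow R$ is a homomorphism of graded $H$-algebras; it is surjective since the $\n_i$ generate $R_+$ modulo $R_+^2$ (graded Nakayama) and injective because both sides have Hilbert series $\prod_i(1-q^{d_i^N})^{-1}$. So $S(\VNd) \cong S(V^*)^N$ as graded $H$-algebras.

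Taking $H$-invariants then yields $S(\VNd)^H \cong \left(S(V^*)^N\right)^{G/N} = S(V^*)^G$, which is a polynomial ring because $G$ is a reflection group. Next I would check that the $H$-action on $\VN$ is faithful, so that $H$ is honestly a finite subgroup of $\mathrm{GL}(\VN)$. For this I would pass to fraction fields: for a finite group acting faithfully on a domain, the invariant subfield is the fraction field of the invariant subring and the extension is Galois with that group, so $\mathrm{Frac}(S(V^*))$ is Galois over $\mathrm{Frac}(R)$ with group $N$ and over $\mathrm{Frac}(S(V^*)^G)$ with group $G$; since $N \triangleleft G$, the intermediate extension $\mathrm{Frac}(R)/\mathrm{Frac}(S(V^*)^G)$ is Galois with group $H$. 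Thus $H$ acts faithfully on $\mathrm{Frac}(R)$, hence on $R$; and since $R$ is generated as a $\CC$-algebra by $\VNd = \spn_\CC\{\n_1,\dots,\n_r\}$, any element of $H$ fixing $\VNd$ pointwise fixes all of $R$, so $H$ acts faithfully on $\VNd$, equivalently on $\VN$.

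With this in hand the proof closes: $H$ is a finite subgroup of $\mathrm{GL}(\VN)$ whose invariant ring $S(\VNd)^H \cong S(V^*)^G$ is polynomial, so \Cref{thm:shephard_todd} gives that $H = G/N$ is a complex reflection group acting on $\VN = V/N$. I expect the main obstacle to be the linearization step: although $S(V^*)^N$ is a polynomial ring, it is not generated in degree one, so a priori the $H$-action need not come from a linear action on an $r$-dimensional space---pinning down the correct space $\VN$ together with an $H$-equivariant identification $S(\VNd) \cong S(V^*)^N$, and then verifying faithfulness, are the substantive points; once the equality $S(\VNd)^H = S(V^*)^G$ is established, Shephard--Todd--Chevalley finishes the argument at once. (This identification will also be the natural place from which to extract the degree and fake-degree identities of \Cref{thm:numbers}.)
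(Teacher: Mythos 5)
Your proof is correct and follows essentially the same route as the paper: both arguments construct an $H$-stable space $\VNd$ of fundamental $N$-invariants as a graded $H$-equivariant complement to $I_+^2$ inside $I_+$ (the paper phrases this as a graded $H$-equivariant section of $I_+\twoheadrightarrow I_+/I_+^2$), identify $S(\VNd)^H$ with $S(V^*)^G$, and conclude via the Shephard--Todd--Chevalley criterion. Your explicit check that $H$ acts faithfully on $\VN$, via the Galois correspondence on fraction fields, is a detail the paper leaves implicit and is a worthwhile addition.
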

\addtocounter{theorem}{-1}
}

\begin{proof} We claim that there exist homogeneous generators $\n_1,\dots,\n_r$ of $S(V^*)^N$ such that $\VN^*=\mathrm{span}_\mathbb{C}\{\n_1,\ldots,\n_r\}$ is $H$-stable. By Theorem~\ref{thm:shephard_todd}, $S(V^*)^N=\mathbb{C}[\tilde{\n}_1,\dots,\tilde{\n}_r]$ for some homogeneous algebraically independent $\tilde{\n}_i$. Let $I_+\subset S(V^*)^N$ be the ideal generated by homogeneous elements of positive degree. Then both $I_+$ and $I_+^2$ are $H$-stable homogeneous ideals, and therefore the algebraic tangent space $I_+/I_+^2$ to $\VN=V/N$ at $0$ inherits a graded action of $H$ that is compatible with the (graded) quotient map $\pi:I_+\twoheadrightarrow I_+/I_+^2$. Hence there exists a graded $H$-equivariant section $\varphi:I_+/I_+^2\rightarrow I_+$. Letting $\n_i=\varphi\circ\pi(\tilde{\n}_i)$ we see that $\n_1,\dots,\n_r$ are still homogeneous algebraically independent generators for $S(V^*)^N$ with $\mathrm{deg}(\n_i)=\mathrm{deg}(\tilde{\n}_i)$ and such that $\VN^*=\spn_\CC\{\n_1,\dots,\n_r\}$ is $H$-stable, as claimed.

Write $\g_1, \ldots, \g_r$ for the homogeneous generators of $S(V^*)^G$, again as in Theorem~\ref{thm:shephard_todd}. Consider the action of $H$ on $\VN^*$ defined by $(gN)\n_i\coloneqq  g\n_i.$ Since $S(V^*)^G= (S(V^*)^N)^H=S(E^*)^H$, there exist polynomials $\h_1,\ldots,\h_r \in \CC[\mathbf{\n}]$ such that $\h_i(\mathbf{\n})=\g_i(\mathbf{x})$, where $\mathbf{\n}=\{\n_1,\dots,\n_r\}$ and $\mathbf{x}=\{x_1,\dots,x_r\}$ denote dual bases for $E$ and $V$, respectively.

Since any algebraic relation $f(\h_1,\ldots,\h_r)=0$ would result in an algebraic relation $f(\g_1,\ldots,\g_r)=0$, the $\h_i$ must be algebraically independent. By \Cref{thm:reflection_action}, $H$ is a complex reflection group.\end{proof}

\begin{remark}
Note that the quotient group $H=G/N$ does not necessarily lift to a reflection subgroup of $G$ nor even a subgroup of $G$. A counterexample is given by $G(4,2,2)=N \triangleleft G=G_8$, so that $G/N \simeq \mathfrak{S}_3$.
\end{remark}

\begin{remark}
\label{rem:indexing}
In the course of the proof of \Cref{thm:reflection_action} we showed that the vector space $\VN=V/N$ on which $H$ acts by reflections is dual to $\VNd:=\spn_\CC\{\n_1,\dots,\n_r\}$ for a certain choice of fundamental $N$-invariants $\n_1,\dots\n_r\in S(V^*)^N$ such that $\VNd$ is $G$-stable. The resulting action of $H$ on $\VN$ respects the $\mathbf{x}$-grading on the $N$-invariants $\n_i(\mathbf{x})$, and therefore there is a choice of fundamental $H$-invariants $\h_1,\dots,\h_r\in S(\VNd)^H=S(V^*)^G$ such that each $\h_i(\mathbf{N})=\g_i(\mathbf{x})$ is both $\mathbf{x}$-homogeneous of $\mathbf{x}$-degree $=:d_i^G$ and $\mathbf{N}$-homogeneous of $\mathbf{N}$-degree $=:d_i^H$. Since the action of $H$ on $\VNd$ respects the homogeneous decomposition of $\VNd$ according to $\mathbf{x}$-degree, the $\h_i(\mathbf{N})$ may be chosen such that the $N$-invariants $\n_j(\mathbf{x})\in\mathbf{N}$ occurring non-trivially in $\h_i(\mathbf{N})$ are all of the same $\mathbf{x}$-degree $=:d_i^N$. This relationship between the fundamental invariants for $N$, $G$, and $H$ acting on their corresponding reflection representations leads to interesting numerological identities.
\end{remark}

The following result motivates some of the theoretical ingredients in our proofs.

\begin{theorem}[{\cite{solomon1963invariants}}]
\label{thm:solomon} If $G\subset\mathrm{GL}(V)$ is a complex reflection group, then the ring $\left(S(V^*)\otimes \bigwedge V^*\right)^G$ is a free exterior algebra over the ring of $G$-invariant polynomials:
\[\left(S(V^*)\otimes \bigwedge V^*\right)^G \simeq S(V^*)^G \otimes \bigwedge U_G,\] where $U_G=\spn_\CC\left\{d\g_1,\ldots,d\g_s\right\}$ and $d\g_i=\sum_{j=1}^r \frac{\partial \g_i}{\partial x_j}  \otimes x_j$.\end{theorem}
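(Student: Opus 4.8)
Write $S=S(V^*)$. The plan is to identify $S\otimes\bigwedge V^*$ with the bigraded ring $\Omega$ of polynomial differential forms on $V$, graded by polynomial degree and by exterior degree, on which $G$ acts diagonally and commutes with the exterior derivative $d$. Each $\g_i\in S^G$ then gives a $G$-invariant $1$-form $d\g_i$ of bidegree $(d_i-1,1)$, and since the $d\g_i$ are odd they anticommute and square to $0$ in $\Omega$; hence there is a well-defined homomorphism of bigraded $S^G$-algebras $\Phi\colon S^G\otimes\bigwedge U_G\to(S\otimes\bigwedge V^*)^G$ sending $\g_i\mapsto\g_i$ and $d\g_i\mapsto d\g_i$. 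Writing $d\g_I:=d\g_{i_1}\wedge\cdots\wedge d\g_{i_k}$ for $I=\{i_1<\cdots<i_k\}\subseteq\{1,\dots,r\}$, the image of $\Phi$ is the $S^G$-submodule of $\Omega^G$ spanned by the $d\g_I$, and the theorem asserts that $\Phi$ is an isomorphism.

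For injectivity I would use the Jacobian $J:=\det(\partial\g_i/\partial x_j)$: it is nonzero because the $\g_i$ are algebraically independent by \Cref{thm:shephard_todd}, and in fact the classical Jacobian theorem gives $J=c\prod_H\ell_H^{e_H-1}$, where $H$ ranges over the reflecting hyperplanes of $G$, $\ell_H$ is a linear form with zero set $H$, $e_H=|G_H|$ is the order of the cyclic pointwise stabilizer of $H$, and $c\in\CC^\times$ — this rests on $J$ being a relative invariant divisible by $\ell_H^{e_H-1}$ for each $H$, together with the degree count $\deg J=\sum_i(d_i-1)=\sum_H(e_H-1)=\#\{\text{reflections of }G\}$. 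Since $d\g_1\wedge\cdots\wedge d\g_r=J\otimes(x_1\wedge\cdots\wedge x_r)\ne 0$, the forms $d\g_1,\dots,d\g_r$ are a $K$-basis of $K\otimes_S\Omega^1$ and the $d\g_I$ a $K$-basis of $K\otimes_S\Omega$, where $K=\mathrm{Frac}\,S$. So $\Phi$ is injective — its image is $S^G$-free on the $d\g_I$ — and likewise $\bigwedge_S(d\g_1,\dots,d\g_r)$ is $S$-free on the $d\g_I$.

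For surjectivity I would show $\Omega^G\subseteq\bigwedge_S(d\g_1,\dots,d\g_r)$ and then descend scalars. As $\bigwedge_S(d\g_i)$ is a free, hence reflexive, $S$-module with $\bigwedge_S(d\g_i)\otimes_S K=K\otimes_S\Omega\supseteq\Omega$, an element of $\Omega$ lies in $\bigwedge_S(d\g_i)$ provided its image in $\Omega_{\mathfrak p}$ lies in $(\bigwedge_S(d\g_i))_{\mathfrak p}$ for every height-one prime $\mathfrak p$ of $S$. Given $\omega\in\Omega^G$: at $\mathfrak p=(\ell_H)$ we have $\omega\in\Omega^{G_H}$, and in coordinates $y_1=\ell_H,y_2,\dots,y_r$ diagonalizing a generator of the cyclic group $G_H$ one computes directly that $\Omega^{G_H}=\bigwedge_{S^{G_H}}(\ell_H^{e_H-1}dy_1,dy_2,\dots,dy_r)$, which localized at $(\ell_H)$ sits inside $\bigwedge_{S_{(\ell_H)}}(\ell_H^{e_H-1}dy_1,dy_2,\dots,dy_r)=(\bigwedge_S(d\g_i))_{(\ell_H)}$ — the last equality because $J$ equals a unit times $\ell_H^{e_H-1}$ in $S_{(\ell_H)}$, so the matrix expressing the $d\g_i$ in the basis $\ell_H^{e_H-1}dy_1,dy_2,\dots,dy_r$ is invertible over $S_{(\ell_H)}$. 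At any height-one $\mathfrak p\ne(\ell_H)$, the Jacobian $J$ is a unit in $S_{\mathfrak p}$, so $(\bigwedge_S(d\g_i))_{\mathfrak p}=\Omega_{\mathfrak p}$. This gives $\Omega^G\subseteq\bigwedge_S(d\g_i)$. Finally, writing $\omega\in\Omega^G$ uniquely as $\omega=\sum_I f_I\,d\g_I$ with $f_I\in S$, the identity $g\omega=\omega$ for all $g\in G$ forces $f_I\in S^G$ by uniqueness, so $\omega\in\mathrm{image}(\Phi)$. Hence $\Phi$ is surjective, so it is an isomorphism of $S^G$-algebras.

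The main obstacle is the surjectivity step, and within it the local computation at the reflecting hyperplanes — the only place where the reflection-group hypothesis is used in an essential rather than a formal way, via the precise form $J=c\prod_H\ell_H^{e_H-1}$ of the Jacobian (equivalently, that hyperplane stabilizers are cyclic and $\sum_i(d_i-1)=\sum_H(e_H-1)$). The reflexivity input is standard commutative algebra, and the local identification of $\Omega^{G_H}$ is just Solomon's theorem for the cyclic group $G_H$, an elementary direct calculation. (A less self-contained alternative, which reverses the logical order of the introduction: by Molien's formula $\mathrm{Hilb}((S\otimes\bigwedge V^*)^G;q,u)=\tfrac1{|G|}\sum_{g\in G}\det_{V^*}(1+ug)/\det_{V^*}(1-qg)$, and evaluating this to $\prod_i\tfrac{1+uq^{d_i-1}}{1-q^{d_i}}=\mathrm{Hilb}(S^G\otimes\bigwedge U_G;q,u)$ as in \eqref{eq:solomon} makes $\Phi$ an injection of bigraded modules with equal termwise-finite Hilbert series, hence an isomorphism.)
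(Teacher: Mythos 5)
The paper offers no proof of \Cref{thm:solomon}: it is quoted verbatim from Solomon's 1963 article and used as a black box (the only trace of an argument in the source is a commented-out fragment), so there is nothing internal to compare against. Judged on its own, your proof plan is correct and is essentially a modernized version of Solomon's original argument. The two nontrivial inputs you isolate are exactly the right ones: the factorization $J=c\prod_H\ell_H^{e_H-1}$ of the Jacobian (equivalently, that $\deg J=\sum_i(d_i-1)$ equals the number of reflections and that $J$ is a $\det$-relative invariant divisible by each $\ell_H^{e_H-1}$), and the elementary computation of $\Omega^{G_H}$ for the cyclic stabilizer of a hyperplane. Where Solomon gets surjectivity by wedging an invariant $p$-form against complementary $d\g_J$'s, observing that the resulting $r$-forms have coefficients divisible by $J$, and solving by Cramer's rule, you get it by the equivalent but cleaner route of checking membership in the free lattice $\bigwedge_S(d\g_1,\dots,d\g_r)$ at all height-one primes and invoking reflexivity; both reduce to the same local statement at each $(\ell_H)$. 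Your final descent from $S$-coefficients to $S^G$-coefficients via uniqueness is correct. Two minor cautions: the identity $\sum_i(d_i-1)=\#\{\text{reflections}\}$ should be sourced from the Molien series of $S^G$ alone (expansion at $q=1$) rather than from \eqref{eq:shephard_todd}, which in this paper's narrative is a downstream consequence of \Cref{thm:solomon}; and your parenthetical Hilbert-series alternative is genuinely circular in the present context, since the evaluation of the Molien-type sum to $\prod_i(1+uq^{e_i})/(1-q^{d_i})$ is itself usually deduced from \Cref{thm:solomon} --- you flag this correctly, so it does not affect the main argument.
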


{
\renewcommand{\thetheorem}{\ref{thm:numbers}}
\begin{theorem}
Let $G,N,H$ be as in~\Cref{thm:reflection_action}. For a suitable choice of indexing of degrees and fake degrees, we have the following equalities:
\begin{align*}
e_i^N(V) {+} e_i^G(\VN) &= e_i^G(V) \\
d_i^N \cdot d_i^H & = d_i^G \\
d_i^N \cdot e_i^H(\VN) & = e_i^G(\VN)
\end{align*}
\end{theorem}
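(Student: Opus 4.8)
The plan is to combine Solomon's theorem (Theorem~\ref{thm:solomon}) with the explicit relationship between the fundamental invariants for $N$, $G$, and $H$ recorded in Remark~\ref{rem:indexing}. Fix once and for all the indexing of that remark: $S(V^*)^N=\CC[\n_1,\dots,\n_r]$ with $\VNd=\spn_\CC\{\n_1,\dots,\n_r\}$ being $G$-stable; $S(V^*)^G=S(\VNd)^H=\CC[\h_1,\dots,\h_r]$ with $\g_i(\mathbf{x})=\h_i(\mathbf{N})$; each $\g_i$ is $\mathbf{x}$-homogeneous of $\mathbf{x}$-degree $d_i^G$, each $\h_i$ is $\mathbf{N}$-homogeneous of $\mathbf{N}$-degree $d_i^H$, and every $\n_j$ occurring in $\h_i$ has $\mathbf{x}$-degree $d_i^N$. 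The second identity $d_i^N d_i^H=d_i^G$ is then immediate: each $\mathbf{N}$-monomial of $\h_i$ is a product of $d_i^H$ of the $\n_j$'s, each of $\mathbf{x}$-degree $d_i^N$, so $\g_i=\h_i(\mathbf{N})$ has $\mathbf{x}$-degree $d_i^N d_i^H$.

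The heart of the matter is the identity $e_i^G(\VN)=d_i^G-d_i^N$. Granting it, the first identity follows from the standard facts $e_i^N(V)=d_i^N-1$ and $e_i^G(V)=d_i^G-1$ (the usual exponents; cf.\ the introduction and Theorem~\ref{thm:OS-untwisted}, applied to $N$ and to $G$ acting on $V$, with indexings matched as in Remark~\ref{rem:indexing}), and the third follows from $e_i^H(\VN)=d_i^H-1$ (the same fact for $H$ acting on $\VN$, which is legitimate since $\VN$ is the reflection representation of $H$ by Theorem~\ref{thm:reflection_action}) together with the already-proved $d_i^N d_i^H=d_i^G$. To prove $e_i^G(\VN)=d_i^G-d_i^N$ I would compute $\Hilb\big((S(V^*)\otimes\VNd)^G;q\big)$ in two ways, where $S(V^*)$ carries the $\mathbf{x}$-grading and the tensor factor $\VNd$ is placed in degree $0$. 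On the one hand, since $(S^k(V^*)\otimes\VNd)^G\cong\mathrm{Hom}_G(\VN,S^k(V^*))$ and $S(V^*)\cong S(V^*)^G\otimes\CC[V]_G$ as graded $G$-modules (Chevalley--Shephard--Todd), this Hilbert series equals $f_{\VN}(q)\big/\prod_{i=1}^r(1-q^{d_i^G})$, where $f_{\VN}(q)=\sum_i q^{e_i^G(\VN)}$ is the fake degree of $\VN$ over $G$. On the other hand, $N$ acts trivially on $\VNd\subset S(V^*)^N$, so
\[
(S(V^*)\otimes\VNd)^G=\big(S(V^*)^N\otimes\VNd\big)^H=\big(S(\VNd)\otimes\VNd\big)^H,
\]
which by Solomon's theorem applied to $H$ acting on $\VN$ is the free $S(\VNd)^H$-module on $d\h_1,\dots,d\h_r$, with $d\h_i=\sum_j\frac{\partial\h_i}{\partial\n_j}\otimes\n_j$. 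In the $\mathbf{x}$-grading, $S(\VNd)^H=S(V^*)^G=\CC[\g_1,\dots,\g_r]$ has generators of $\mathbf{x}$-degree $d_i^G$, while each nonzero coefficient $\frac{\partial\h_i}{\partial\n_j}$ is $\mathbf{x}$-homogeneous of degree $d_i^G-d_i^N$ (differentiating $\h_i$, of $\mathbf{x}$-degree $d_i^G$, in a variable of $\mathbf{x}$-degree $d_i^N$), and the outer $\n_j$ sits in degree $0$; hence the Hilbert series also equals $\sum_i q^{d_i^G-d_i^N}\big/\prod_i(1-q^{d_i^G})$. Comparing the two expressions gives $f_{\VN}(q)=\sum_i q^{d_i^G-d_i^N}$, i.e.\ $e_i^G(\VN)=d_i^G-d_i^N$.

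I expect the main obstacle to be bookkeeping: one must arrange a single indexing in which all three displayed equations hold term by term, not merely as identities of multisets. This is exactly what Remark~\ref{rem:indexing} secures, and it rests on the following structural point, which also underlies the single-$\mathbf{x}$-degree property of the $\h_i$ used above: since $H$ preserves the $\mathbf{x}$-grading $\VNd=\bigoplus_\delta\VNd_\delta$, every reflection of $H$ has its one-dimensional moving space inside a single $\VNd_\delta$, so $H=\prod_\delta H_\delta$ with $H_\delta$ a reflection group on $\VNd_\delta$. Consequently $S(\VNd)^H=\bigotimes_\delta S(\VNd_\delta)^{H_\delta}$, the multiset $\{d_i^N\}_{i=1}^r$ equals $\{\deg_{\mathbf{x}}\n_j\}_{j=1}^r$ with matching multiplicities, and the fake degrees of $V$ over $N$ can be indexed as $(d_i^N-1)_i$ compatibly with the indexing of the $\h_i$. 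With the indexing pinned down this way, the two Hilbert-series computations are routine, and the first and third identities reduce to immediate arithmetic.
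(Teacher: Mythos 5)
Your proposal is correct and follows essentially the same route as the paper: both hinge on the indexing of \Cref{rem:indexing}, the immediate identity $d_i^N\cdot d_i^H=d_i^G$, and \Cref{thm:solomon} applied to $H$ acting on $\VN$ via $(S(V^*)\otimes\VNd)^G=(S(\VNd)\otimes\VNd)^H\simeq S(V^*)^G\otimes U_H$, which identifies the $d\h_i$ as free-module generators and yields $e_i^G(\VN)=d_i^G-d_i^N=d_i^N e_i^H(\VN)$. The only cosmetic difference is that the paper derives the first identity by comparing $\mathbf{x}$-degrees in the chain-rule expansion $d\g_i=\sum_k\frac{\partial \h_i}{\partial \n_k}\, d\n_k$ rather than arithmetically from $e_i=d_i-1$, and your explicit verification that the multiset $\{d_i^N\}$ coincides with the multiset of degrees of $N$ (via the decomposition of $H$ along the $\mathbf{x}$-graded pieces of $\VNd$) makes precise a point the paper leaves implicit in \Cref{rem:indexing}.
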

\addtocounter{theorem}{-1}
}

\begin{proof} Having chosen fundamental $N$-invariants $\n_1,\dots,\n_r\in S(V^*)^N$ such that $\VNd=\spn_\CC\{\n_1,\dots,\n_r\}$ is $G$-stable as in the proof of \Cref{thm:reflection_action} and \Cref{rem:indexing}, we have fundamental $G$-invariants $\g_i(\mathbf{x})=\h_i(\mathbf{N})$ that are $\mathbf{x}$-homogeneous of $\mathbf{x}$-degree $d_i^G$ and $\mathbf{N}$-homogeneous of $\mathbf{N}$-degree $d_i^H$, and where the $\n_j\in\mathbf{N}$ occurring non-trivially in $\h_i(\mathbf{N})$ are all of the same $\mathbf{x}$-degree $d_i^N$. The equality $d_i^Nd_i^H=d_i^G$ is immediate.

Let us show that this same choice of indexing of fundamental invariants for $N$, $G$, and $H$ results in the other two equalities. We begin by comparing $\mathbf{x}$-degrees in \[d\g_i=\sum_{j=1}^r\frac{\partial\g_i}{\partial x_j}\otimes x_j=\sum_{k=1}^r\frac{\partial \h_i}{\partial \n_k}\cdot d\n_k=\sum_{k=1}^r\sum_{j=1}^r\frac{\partial \h_i}{\partial \n_k}\cdot\frac{\partial\n_k}{\partial x_j}\otimes x_j.\] Recall that $e_i^G(V)=d_i^G-1=\mathrm{deg}_\mathbf{x}(d\g_i)$ and $e_i^N(V)=d_i^N-1=\mathrm{deg}_\mathbf{x}(d\n_i)$. Similarly, $e_i^H(E)=d_i^H-1=\mathrm{deg}_\mathbf{N}(d\h_i)$, where this time $d\h_i=\sum_{k=1}^r\frac{\partial \h_i}{\partial \n_k}\otimes\n_k\in  (S(\VNd)\otimes \VNd)^H$. Since $\frac{\partial \h_i}{\partial \n_k}=0$ whenever $\mathrm{deg}_\mathbf{x}(\n_k)\neq d_i^N$, it follows that \[e_i^G(V)=e_i^N(V)+d_i^N\cdot (d_i^H-1)=e_i^N(V)+d_i^N\cdot e_i^H(E).\] It remains to show that $d_i^Ne_i^H(E)=e_i^G(\VN)$.

The $e_i^G(\VN)$ are known to coincide with the $\mathbf{x}$-degrees of any set of homogeneous generators for $(S(V^*)\otimes\VNd)^G$ as a free $S(V^*)^G$-module. Since $\VNd$ consists of $N$-invariants, \[(S(V^*)\otimes\VNd)^G=((S(V^*)\otimes\VNd)^N)^H=(S(\VNd)\otimes\VNd)^H\simeq S(\VNd)^H\otimes U_H=S(V^*)^G\otimes U_H,\] where again $U_H:=\spn_\CC\{d\h_1,\dots,d\h_r\}$ and the non-trivial isomorphism comes from \Cref{thm:solomon} applied to the reflection representation $\VN$ of $H$. Hence $(S(V^*)\otimes\VNd)^G$ is generated by $d\h_i$ as a free $S(V^*)^G$-module, whence $e_i^G(\VN)=\mathrm{deg}_\mathbf{x}(d\h_i)=d_i^Ne_i^H(\VN)$.\end{proof}

\begin{remark}\label{rem:bigraded_solomon}
The same argument used in the proof of \Cref{thm:numbers} shows more generally: \[(S(V^*)\otimes\bigwedge \VNd)^G=((S(V^*)\otimes \bigwedge \VNd)^N)^H=(S(\VNd)\otimes\bigwedge \VNd)^H\simeq S(V^*)^G\otimes\bigwedge U_H.\] 
\end{remark}

\section{Poincar\'e Series and Specializations}\label{sec:poincare}

Our goal in this section is to prove our main result:

{
\renewcommand{\thetheorem}{\ref{thm:main_theorem}}
\begin{theorem}
Let $N\triangleleft \ G$ be reflection groups acting by reflections on $V$, and let $\VN=V/N$. Then
\[\sum_{g \in G} q^{\fix_V g} t^{\fix_{\VN} g} = \prod_{i=1}^r \left(qt+e_i^N(V) t + e_i^G(\VN)\right).\]
\end{theorem}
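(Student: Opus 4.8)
The plan is to mirror the deduction of the Shephard--Todd formula (\Cref{thm:OS-untwisted}) from Solomon's theorem (\Cref{thm:solomon}), carrying both fixed-space statistics at once. Recall how that deduction runs: Molien's formula identifies the Hilbert series of $\bigl(S(V^*)\otimes\bigwedge V^*\bigr)^G$ with $\tfrac1{|G|}\sum_g \det_{V^*}(1+ug)/\det_{V^*}(1-xg)$, so Solomon's theorem reads
\[
\frac{1}{|G|}\sum_{g\in G}\frac{\det_{V^*}(1+ug)}{\det_{V^*}(1-xg)}=\prod_{i=1}^{r}\frac{1+ux^{e_i^G(V)}}{1-x^{d_i^G}};
\]
substituting $u=q(1-x)-1$ (so $1+u=q(1-x)$) and letting $x\to1$ makes each unit eigenvalue of $g$ contribute a factor $q$ to $\det_{V^*}(1+ug)/\det_{V^*}(1-xg)$ and each other eigenvalue $\mu$ a factor $\bigl((1-\mu)+q(1-x)\mu\bigr)/(1-x\mu)\to1$, whence $\sum_g q^{\fix_V g}=\prod_i(q+e_i^G(V))$. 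To treat $\fix_{\VN}$ the same way one needs, in addition, a symmetric/exterior pair built on the $G$-module $\VNd$.

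Accordingly, I would introduce the four-graded invariant ring
\[
\mathcal{B}:=\bigl(S(V^*)\otimes\textstyle\bigwedge V^*\otimes S(\VNd)\otimes\bigwedge\VNd\bigr)^{G},
\]
with variables $x,u,y,v$ recording the four natural degrees. By Molien,
\[
\mathrm{Hilb}(\mathcal{B};x,u,y,v)=\frac{1}{|G|}\sum_{g\in G}\frac{\det_{V^*}(1+ug)\,\det_{\VNd}(1+vg)}{\det_{V^*}(1-xg)\,\det_{\VNd}(1-yg)},
\]
and substituting $u=q(1-x)-1$, $v=t(1-y)-1$ and letting $x\to1$, $y\to1$ the same cancellation in both slots (using $\fix_{\VN}g=\fix_{\VNd}g$) sends each summand to $q^{\fix_V g}t^{\fix_{\VN}g}$, so $|G|\cdot\mathrm{Hilb}(\mathcal{B})$ specializes to $\sum_g q^{\fix_V g}t^{\fix_{\VN}g}$. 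The theorem thus reduces to a closed form for $\mathrm{Hilb}(\mathcal{B})$ together with the verification that its specialization equals $\prod_i\bigl(qt+e_i^N(V)t+e_i^G(\VN)\bigr)$.

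Computing $\mathrm{Hilb}(\mathcal{B})$ is the heart of the argument, and it amounts to a relative refinement of Orlik--Solomon. I would run it through the tower $N\triangleleft G$. Since $\VNd$ consists of $N$-invariants, $\mathcal{B}=\bigl((S(V^*)\otimes\bigwedge V^*)^N\otimes S(\VNd)\otimes\bigwedge\VNd\bigr)^{H}$, and \Cref{thm:solomon} for $N$ replaces $(S(V^*)\otimes\bigwedge V^*)^N$ by $S(V^*)^N\otimes\bigwedge U_N=S(\VNd)\otimes\bigwedge U_N$, where $U_N=\spn\{d\n_i\}$ is $H$-stable because $\spn\{\n_i\}$ is $H$-stable (by the proof of \Cref{thm:reflection_action}) and $\n\mapsto d\n$ is $H$-equivariant. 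One is left with an $H$-invariant ring built from two copies of $S(\VNd)$ (carrying the $x$- and $y$-gradings), $\bigwedge U_N$, and $\bigwedge\VNd$; I would evaluate its Hilbert series by decomposing $\bigwedge U_N$ into graded $H$-isotypic pieces, applying \Cref{thm:solomon} for $H$ acting on $\VN$ (this is what underlies \Cref{rem:bigraded_solomon}), and using the freeness of $(S(\VNd)\otimes W)^{H}$ over $S(\VNd)^{H}=S(V^*)^G$ for the relevant irreducibles $W$, whose generating degrees are fake degrees of $H$. The upshot is a product over $i=1,\dots,r$ of rational functions which, after the $x,y\to1$ specialization, collapses factor by factor to $qt+e_i^N(V)t+e_i^G(\VN)$; the collapse comes out this way precisely because the coherent indexing of degrees and fake degrees for $N$, $G$, $H$ furnished by \Cref{thm:numbers} and \Cref{rem:indexing} --- in particular $e_i^G(V)=e_i^N(V)+e_i^G(\VN)$ and $d_i^G=d_i^N d_i^H$ --- places the $V$-exponent over $N$ and the $\VN$-exponent over $G$ inside one linear factor.

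The main obstacle is this Hilbert-series identity for $\mathcal{B}$: passing from the $H$-invariant description to a product over $i$ with the correct bigraded generating degrees requires the refined Orlik--Solomon-type freeness and degree bookkeeping together with a choice of fundamental invariants realizing all the identities of \Cref{thm:numbers} simultaneously. By contrast, the Molien computation and the two-variable $x,y\to1$ specialization are routine once the unit-eigenvalue cancellation is in place, exactly as in the passage to \Cref{thm:OS-untwisted}. Finally, since the whole argument is expressed through characters and invariant-theoretic data, inserting the linear characters and Galois twists of \Cref{sec:galois} into the determinants yields the stated twisted generalization with no new ideas.
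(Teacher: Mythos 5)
Your sum side is fine --- in fact, by tensoring in a second symmetric algebra $S(\VNd)$ you get a denominator $\det_{\VNd}(1-yg)$ that makes every single term of the Molien sum have a finite limit $q^{\fix_V g}t^{\fix_{\VN}g}$, which neatly sidesteps the difficulty the paper has to confront (in its setup the term-by-term limit degenerates whenever $\fix_{\VN}g>\fix_V g$, forcing the coset-by-coset analysis of \Cref{prop:twisted_graded} and \Cref{cor:twisted_graded} via Bonnaf\'e--Lehrer--Michel). The genuine gap is on the product side, which is where all the content of \Cref{thm:main_theorem} now sits. Your ring $\mathcal{B}$ contains $\bigl(S(V^*)\otimes S(\VNd)\bigr)^G$, and after passing through Solomon's theorem for $N$ you are left with the \emph{diagonal} $H$-invariants of two copies of $S(\VNd)$ (one carrying the $x$-grading, one the $y$-grading) tensored with exterior factors. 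This is not free over $S(V^*)^G\otimes S(\VNd)^G$, and its Hilbert series is not a product of rational functions indexed by $i=1,\dots,r$: already for $N=1$ and $G=\mathfrak{S}_n$ you are looking at the diagonal (super)invariant ring, whose Hilbert series is notoriously not of product form. (Even for $G=C_2$ acting on $V=\CC$ with $N=1$, the numerator of $\mathrm{Hilb}(\mathcal{B})$ is $(1+uv)(1+xy)+(u+v)(x+y)$, which does not factor as $(1+u\alpha)(1+v\beta)$.) The lemma you invoke --- freeness of $(S(\VNd)\otimes W)^H$ over $S(\VNd)^H$ with generators in fake-degree positions --- controls one polynomial tensor factor, not two, so the promised ``collapse factor by factor'' has nothing to collapse. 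Since the specialized sum side is tautologically $\sum_g q^{\fix_V g}t^{\fix_{\VN}g}$, your ``reduction'' to evaluating the specialization of $\mathrm{Hilb}(\mathcal{B})$ is a restatement of the theorem, and the proposed method for that evaluation fails.

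The paper avoids this by never introducing a second symmetric algebra. It works with the single module $(S(V^*)\otimes\bigwedge\VNd)^G$, and the variable destined to become $t$ records not a new polynomial degree but the $\mathbf{x}$-degrees of the $N$-invariants $\n_j$ spanning $\VNd$ (hence the specialization $y=x^t$, $u=qt(1-x)-1$ coupling the two gradings). With only one symmetric algebra, \Cref{thm:solomon} applied to $H$ together with the degree bookkeeping of \Cref{thm:numbers} and \Cref{rem:indexing} gives an honest product formula (\Cref{lem:product_side}, via \Cref{rem:bigraded_solomon}), and the price is paid on the sum side instead. If you want to salvage your architecture, you would need an independent evaluation of $\lim_{x,y\to 1}\mathrm{Hilb}(\mathcal{B})$ under your substitution, and the only visible route to that is to regroup the Molien sum over cosets of $N$ and invoke \Cref{cor:twisted_os} --- i.e., to reintroduce exactly the machinery of the paper's Sections on the sum side. (A smaller issue: your final answer must produce the integers $e_i^N(V)$, which are $\mathbf{x}$-degrees of the $\n_j$; with $y$ grading $S(\VNd)$ intrinsically and tending to $1$ independently of $x$, it is not clear where these would enter your product-side bookkeeping at all.)
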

\addtocounter{theorem}{-1}
}

We refer to the left-hand side of~\Cref{thm:main_theorem} as the \defn{sum side}, and to the right-hand side as the \defn{product side}.  
We prove \Cref{thm:main_theorem} by computing the Poincar\'e series for $(S(V^*)\otimes\bigwedge \VNd)^G$ in two different (and standard) ways, keeping track of the supplemental grading afforded by the $\mathbf{x}$-degrees of $N$-invariants in $\VNd=\spn_\CC\{\n_1,\dots,\n_r\}$: one way corresponds to the product side (\Cref{sec:prod}), and the other to the sum side (\Cref{sec:sum}).  A subtlety arises when trying to compute the term-by-term specialization for the sum side, which is dealt with in~\Cref{sec:coseti,sec:cosetii}.

A more general version of~\Cref{thm:main_theorem} that incorporates  Galois twists is stated in~\Cref{sec:galois}. For technical reasons that arise in that generalization, we will define the shifted homogeneous decomposition $\VN^*_m:=\spn_\CC\{\n_i \ | \ \mathrm{deg}_\mathbf{x}(\n_i)=m+1\}$, and similarly
\[({\textstyle\bigwedge^p}\VNd)_m:=\spn_\CC\{ \n_{i_1}\wedge\dots\wedge\n_{i_p}\in{\textstyle\bigwedge^p}\VNd \ | \ {\textstyle\sum_{j=1}^p}\mathrm{deg}_\mathbf{x}(\n_{i_j})=m+p\}.\]
Writing $S(V^*)_\ell$ for the homogeneous component of $\mathbf{x}$-degree $\ell$, we define the Poincar\'e series
\begin{equation}\label{eq:p-series-defn}
\mathcal{P}(x,y,u):=\sum_{\ell,m,p\geq 0}\mathrm{dim}_\CC((S(V^*)_\ell\otimes ({\textstyle\bigwedge^p}\VNd)_m)^G)x^\ell y^m u^p.
\end{equation}
We write $\ENs=\{e_1^N(\Vs),\dots,e_r^N(\Vs)\}$ for the set of fake degrees of $\Vs$ as an $N$-representation.

\subsection{Product Side}
\label{sec:prod}
We first obtain the following product formula for the Poincar\'e series $\mathcal{P}(x,y,u)$ defined in \Cref{eq:p-series-defn} from \Cref{thm:numbers} and \Cref{rem:bigraded_solomon}, since $d\h_i\in S(V^*)_{e_i^G(E)}\otimes \VN^*_{e_i^N(V)}$.

\begin{lemma}
\label{lem:product_side}
$\hilb(x,y,u)=\displaystyle\prod\limits_{i=1}\limits^r\frac{1+x^{e_i^G(\VN)}y^{e_i^N(V)}u}{1-x_i^{d_i^G}}.$
\end{lemma}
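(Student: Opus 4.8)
The plan is to read $\hilb(x,y,u)$ directly off the tri-graded refinement of Solomon's theorem recorded in \Cref{rem:bigraded_solomon}, which supplies the chain
\[(S(V^*)\otimes{\textstyle\bigwedge}\VNd)^G=\big((S(V^*)\otimes{\textstyle\bigwedge}\VNd)^N\big)^H=(S(\VNd)\otimes{\textstyle\bigwedge}\VNd)^H\simeq S(V^*)^G\otimes{\textstyle\bigwedge}U_H,\]
where $U_H=\spn_\CC\{d\h_1,\dots,d\h_r\}$. The first equality holds because $N$ acts trivially on $\VNd$, so only the $S(V^*)$ factor is affected and $S(V^*)^N=S(\VNd)=\CC[\n_1,\dots,\n_r]$ --- an identification compatible with the $\mathbf{x}$-grading since $\mathrm{deg}_\mathbf{x}(\n_i)=d_i^N$. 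The final isomorphism is \Cref{thm:solomon} applied to the reflection representation $\VN$ of $H$, realized by the multiplication map $f\otimes d\h_{i_1}\wedge\cdots\wedge d\h_{i_p}\mapsto f\,d\h_{i_1}\wedge\cdots\wedge d\h_{i_p}$.

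The main step is to compute, for the three gradings of \Cref{eq:p-series-defn}, the tridegree of each exterior generator $d\h_i=\sum_{k=1}^r\frac{\partial\h_i}{\partial\n_k}\otimes\n_k$. I would invoke the indexing of \Cref{rem:indexing} and the identities of \Cref{thm:numbers}: since every $\n_k$ occurring nontrivially in $\h_i$ has the same $\mathbf{x}$-degree $d_i^N$, each coefficient $\frac{\partial\h_i}{\partial\n_k}$ is $\mathbf{x}$-homogeneous of $\mathbf{x}$-degree $d_i^G-d_i^N=e_i^G(\VN)$, while the accompanying $\n_k$ lies in $\VN^*_{d_i^N-1}=\VN^*_{e_i^N(V)}$ and has exterior degree $1$; that is, $d\h_i\in S(V^*)_{e_i^G(\VN)}\otimes\VN^*_{e_i^N(V)}$, contributing the monomial $x^{e_i^G(\VN)}y^{e_i^N(V)}u$. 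The multiplication isomorphism respects each of the three gradings (they each add under products), so a basis element $\bigwedge_{i\in S}d\h_i$ of $\bigwedge U_H$ has tridegree $\big(\sum_{i\in S}e_i^G(\VN),\ \sum_{i\in S}e_i^N(V),\ |S|\big)$; since the $d\h_i$ are exterior-algebra independent, the trivariate Poincar\'e series of $\bigwedge U_H$ equals $\prod_{i=1}^r\big(1+x^{e_i^G(\VN)}y^{e_i^N(V)}u\big)$.

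It remains only to note that the polynomial factor $S(V^*)^G=\CC[\g_1,\dots,\g_r]$, with $\g_i=\h_i(\mathbf{N})$ of $\mathbf{x}$-degree $d_i^G$, sits entirely in exterior degree $0$ and in $y$-degree $0$, hence contributes $\prod_{i=1}^r(1-x^{d_i^G})^{-1}$; multiplying the two factors gives
\[\hilb(x,y,u)=\prod_{i=1}^r\frac{1+x^{e_i^G(\VN)}y^{e_i^N(V)}u}{1-x^{d_i^G}},\]
which is the claim of \Cref{lem:product_side} (the subscript on $x$ in the displayed denominator there should read $x^{d_i^G}$).

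The only real obstacle is justifying that the exterior-algebra isomorphism of \Cref{rem:bigraded_solomon} --- which a priori is graded only for the $\mathbf{N}$-degree and exterior degree coming from Solomon's theorem --- refines to a tri-graded isomorphism for the $\mathbf{x}$-based gradings of \Cref{eq:p-series-defn}. This rests squarely on the homogeneity statement in \Cref{rem:indexing}: it is precisely because the $\n_k$ appearing in a fixed $\h_i$ share a common $\mathbf{x}$-degree $d_i^N$ that each $d\h_i$ is $\mathbf{x}$-homogeneous, with the split $e_i^G(\VN)+d_i^N$ of its $\mathbf{x}$-degree between the polynomial and exterior parts --- the split that yields the product formula.
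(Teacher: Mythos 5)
Your proof is correct and follows exactly the paper's (much terser) argument: the paper derives \Cref{lem:product_side} directly from \Cref{rem:bigraded_solomon} together with the observation that $d\h_i\in S(V^*)_{e_i^G(\VN)}\otimes \VN^*_{e_i^N(V)}$, which is precisely the tridegree computation you carry out in detail. Your added justification of the $\mathbf{x}$-homogeneity of $d\h_i$ via \Cref{rem:indexing} and \Cref{thm:numbers}, and your note that the denominator should read $x^{d_i^G}$, are both accurate fillings-in of what the paper leaves implicit.
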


\begin{corollary}
\label{cor:prod_side_specialization}
$\lim\limits_{x\to1}\hilb\left(x,x^t,qt(1-x)-1\right)=\displaystyle\prod\limits_{i=1}\limits^r \left(qt+e_i^N(V) t + e_i^G(\VN)\right).$
\end{corollary}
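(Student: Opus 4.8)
The plan is to substitute the specialization $y=x^t$, $u=qt(1-x)-1$ into the product formula of \Cref{lem:product_side} and to evaluate the limit $x\to1$ one factor at a time. After the substitution,
\[
\hilb\bigl(x,\,x^t,\,qt(1-x)-1\bigr)=\prod_{i=1}^r\frac{1+x^{e_i^G(\VN)+t\,e_i^N(V)}\bigl(qt(1-x)-1\bigr)}{1-x^{d_i^G}},
\]
and the $i$-th numerator rewrites as $\bigl(1-x^{e_i^G(\VN)+t\,e_i^N(V)}\bigr)+qt\,(1-x)\,x^{e_i^G(\VN)+t\,e_i^N(V)}$, which vanishes at $x=1$; the denominator $1-x^{d_i^G}$ vanishes there as well. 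So each of the $r$ factors is an indeterminate $0/0$ at $x=1$, but has a removable singularity — the apparent simple pole from $1-x^{d_i^G}$ is cancelled by the simple zero of the numerator — and hence $\lim_{x\to1}$ of the product equals the product of the $r$ factorwise limits. One should first observe that this makes sense: since $\hilb$ is polynomial in $u$, the specialized expression is a genuine function of $x$ on $(0,1)$.

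The core of the argument is the computation of each factorwise limit. This is a direct application of L'Hôpital's rule, or equivalently of the expansion $x^{a}=1-a(1-x)+O\bigl((1-x)^2\bigr)$ valid for any exponent $a$: near $x=1$ the $i$-th numerator equals $\bigl(qt+e_i^N(V)\,t+e_i^G(\VN)\bigr)(1-x)+O\bigl((1-x)^2\bigr)$ and the $i$-th denominator equals $d_i^G(1-x)+O\bigl((1-x)^2\bigr)$. Assembling these over $i$ — using the degree identity $d_i^G=1+e_i^N(V)+e_i^G(\VN)$ of \Cref{thm:numbers} to recognize the exponent appearing in the numerator, together with $\prod_i d_i^G=|G|$ from \Cref{thm:shephard_todd} — yields the product on the right-hand side. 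Equivalently, one may compare the leading coefficients in $(1-x)$ of the numerator $\prod_i\bigl(1+x^{e_i^G(\VN)+t\,e_i^N(V)}u\bigr)$ and the denominator $\prod_i\bigl(1-x^{d_i^G}\bigr)$ of $\hilb$ after substitution, each of which vanishes to order exactly $r$ at $x=1$.

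I do not expect a genuine obstacle here: everything reduces to a routine limit once \Cref{lem:product_side} and the degree identities of \Cref{thm:numbers} are in hand. The one subtlety worth flagging is that the substitution $y=x^t$ produces the non-integral exponents $e_i^G(\VN)+t\,e_i^N(V)$, so the $0/0$ forms cannot be cleared by the familiar geometric-series identity $1-x^n=(1-x)\sum_{k=0}^{n-1}x^k$; instead one differentiates, or expands in powers of $1-x$, as above.
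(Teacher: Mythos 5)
Your approach --- substitute $y=x^t$, $u=qt(1-x)-1$ into \Cref{lem:product_side}, expand each factor to first order in $1-x$, and take the limit factorwise --- is exactly the intended argument; the paper offers no explicit proof of this corollary, treating it as a routine specialization, and your first-order expansions are correct: the $i$-th numerator is $\bigl(qt+e_i^N(V)\,t+e_i^G(\VN)\bigr)(1-x)+O\bigl((1-x)^2\bigr)$ and the $i$-th denominator is $d_i^G(1-x)+O\bigl((1-x)^2\bigr)$. (The identity $d_i^G=1+e_i^N(V)+e_i^G(\VN)$ from \Cref{thm:numbers} is not actually needed to read off the numerator's coefficient, which is what it is for arbitrary $t$; it only enters as a sanity check at $q=t=1$.)

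The one genuine gap is in your final assembly step. Taking the factorwise limits literally gives
\[
\lim_{x\to1}\hilb\bigl(x,x^t,qt(1-x)-1\bigr)=\prod_{i=1}^r\frac{qt+e_i^N(V)\,t+e_i^G(\VN)}{d_i^G}=\frac{1}{|G|}\prod_{i=1}^r\bigl(qt+e_i^N(V)\,t+e_i^G(\VN)\bigr),
\]
which is $1/|G|$ times the stated right-hand side. Citing $\prod_i d_i^G=|G|$ from \Cref{thm:shephard_todd} identifies the discrepancy but does not remove it: \Cref{lem:product_side}, as a generating function for dimensions, carries no compensating factor of $|G|$ in its numerator. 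You should be aware that this is a normalization inconsistency internal to the paper --- \Cref{lem:product_side} and \Cref{thm:5.2} are both normalized as dimension series (the latter with its $\frac{1}{|G|}$ average), yet the first display in the proof of \Cref{thm:main_theorem} silently inserts an extra $|G|$ on the product side, and \Cref{cor:twisted_graded} drops a $\frac{1}{|N|}$ in the same way; all these factors cancel in the final theorem, which is correct. So your proof establishes the corollary up to the same factor of $|G|$ by which the paper's own statement is off; to be airtight you should either prove the corrected statement with the $\frac{1}{|G|}$ prefactor, or state explicitly that you are renormalizing $\hilb$ by $|G|$ and carry that convention consistently into the proof of \Cref{thm:main_theorem}.
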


\subsection{Sum Side}
\label{sec:sum}
By taking traces, we now compute a formula for the Poincar\'e series $\mathcal{P}(x,y,u)$ defined in \Cref{eq:p-series-defn} as a sum over elements of $G$. To simplify notation, we denote by $\VN_m$ the homogeneous component of $\VN$ corresponding to the dual of $\VN^*_m$.

\begin{lemma} \label{thm:5.2} 
$
\hilb(x,y,u)=
\frac{1}{|G|} \displaystyle\sum\limits_{g \in G} \frac{\prod_{m\in\ENs}\mathrm{det}(1+uy^{m}g|_{\VN_m})}{\det(1-xg|_{V})}.$
\end{lemma}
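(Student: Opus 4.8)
The plan is to compute the Poincaré series $\hilb(x,y,u)$ by the standard Molien-type averaging argument, being careful about the two gradings in play. The ring $(S(V^*)\otimes\bigwedge\VNd)^G$ is the image of the projector $\tfrac{1}{|G|}\sum_{g\in G}g$ acting on $S(V^*)\otimes\bigwedge\VNd$, so its Hilbert series is the trace of that projector, computed grading-by-grading. Since trace is additive and the projector commutes with the grading operators, $\hilb(x,y,u)=\tfrac{1}{|G|}\sum_{g\in G}\operatorname{tr}\bigl(g \mid S(V^*)\otimes\bigwedge\VNd\bigr)$ where the trace is taken with the formal variables $x$ (tracking $\mathbf{x}$-degree on $S(V^*)$), $u$ (tracking exterior degree on $\bigwedge\VNd$), and $y$ (tracking the \emph{shifted} $\mathbf{x}$-degree $m$ of the $N$-invariant generators $\n_i$ sitting inside $\bigwedge\VNd$).

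\medskip

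The computation then factors as a product of three independent pieces, one for each tensor factor and grading. First, the graded trace of $g$ on the symmetric algebra $S(V^*)$ with $x$ tracking degree is the classical Molien identity $\tfrac{1}{\det(1-xg|_V)}$. Second, I must compute the graded trace of $g$ on $\bigwedge\VNd$ where the exterior-power grading is tracked by $u$ but the internal $\mathbf{x}$-grading of the generators is tracked by $y$. Here I use the decomposition $\VNd=\bigoplus_m\VNd_m$ into $g$-stable homogeneous components (each $\VNd_m$ is $G$-stable since $G$ preserves $\mathbf{x}$-degree of $N$-invariants), so that $\bigwedge\VNd=\bigotimes_m\bigwedge\VNd_m$; on each factor the graded trace of $g$ on $\bigwedge\VNd_m$ with $u$ tracking exterior degree is $\det\bigl(1+u\,g\mid\VNd_m\bigr)$, and inserting the weight $y^m$ for each generator of shifted degree $m$ converts this to $\det\bigl(1+uy^m g\mid\VNd_m\bigr)$. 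Multiplying over all $m$ gives $\prod_{m\in\ENs}\det(1+uy^m g\mid\VNd_m)$. The indexing set is exactly $\ENs$, the set of shifted $\mathbf{x}$-degrees $m=\deg_\mathbf{x}(\n_i)-1$ of the fundamental $N$-invariants — i.e.\ the fake degrees of $V$ as an $N$-representation — since those are precisely the degrees in which $\VNd$ has nonzero homogeneous components (with multiplicity). Finally, I must observe that $\operatorname{tr}(g\mid\VNd_m)=\operatorname{tr}(g\mid\VN_m)$: since $g$ has finite order it is diagonalizable, and the eigenvalues of $g$ on a representation and its dual are complex conjugates, hence (being roots of unity) reciprocals; but $\det(1+uy^m g)$ expressed via eigenvalues $\lambda_k$ is $\prod_k(1+uy^m\lambda_k)$, and this is \emph{not} obviously invariant under $\lambda_k\mapsto\lambda_k^{-1}$. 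So I must be slightly more careful: the correct statement is that $g$ acting on the \emph{quotient} $\VN=V/N$ at the tangent-space level is dual to the action on $\VNd\subset S(V^*)^N$, and the lemma is stated with $\VN_m$ precisely because the sum side is meant to specialize to $t^{\fix_{\VN}g}$ on the quotient. The cleanest route is to define $\VN_m$ as the lemma does — the dual of $\VNd_m$ — and then note that for the purpose of computing the Poincaré series of $(S(V^*)\otimes\bigwedge\VNd)^G$ we may equally well write the exterior traces on $\VN_m$ rather than $\VNd_m$, because $(S(V^*)\otimes\bigwedge\VNd)^G$ and $(S(V^*)\otimes\bigwedge\VN)^G$ have the same Hilbert series (the contragredient of a $G$-module has the same invariant Hilbert series as the module itself — apply this compatibly with the grading). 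This last point is the one requiring the most care in the write-up.

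\medskip

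\textbf{The main obstacle} I anticipate is precisely this bookkeeping swap between $\VNd_m$ and $\VN_m$: I need to justify cleanly why the invariant Hilbert series is unchanged when each homogeneous piece $\VNd_m$ is replaced by its dual $\VN_m$, and do so in a way that respects \emph{all three} gradings simultaneously (in particular the $y$-grading, which lives on the $N$-invariant generators and must be carried along). The resolution is that averaging $g\mapsto g^{-1}$ over the group $G$ (a bijection of $G$) together with the observation that $\det(1-x g^{-1}|_V)=\det(-xg^{-1})\det(1-x^{-1}g|_V)$ and similar identities for the exterior factors lets one match the sum over $g$ against the sum over $g^{-1}$ term by term after absorbing determinant prefactors; alternatively, and more transparently, one simply notes that $S(V^*)$ already appears with $V^*$ (not $V$) so the "dualization" is a matter of convention and one defines $\VN$, as in the statement, to be the representation contragredient to the span of the chosen $N$-invariants, which is exactly the quotient space $V/N$ on which $H$ — and hence $G$ — acts. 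Once that convention is fixed, the formula drops out as the product of the three graded traces computed above, summed over $g\in G$ and divided by $|G|$, which is exactly the claimed expression.
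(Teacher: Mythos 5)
Your proposal is correct and follows essentially the same route as the paper: a Molien-type average of graded traces, with $\bigwedge\VNd$ decomposed as $\bigotimes_m\bigwedge\VNd_m$ so each factor contributes $\det(1+uy^m g|_{\VNd_m})$, and the passage from the dual spaces $\VNd_m$ to $\VN_m$ handled by the bijection $g\mapsto g^{-1}$ when averaging over $G$. The dual-versus-quotient bookkeeping you flag as the main obstacle is precisely the step the paper also absorbs into ``taking the average over $G$'' after rewriting the per-element traces in terms of $g^{-1}$.
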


\begin{proof}
Since $\UNsd\simeq \bigoplus_{m\in\ENs} \VN^*_m$, we have that
$\bigwedge \UNsd\simeq \bigotimes_{m\in\ENs}\bigwedge \VN^*_m$ as $G$-modules. hence, for each $g\in G$, \[\sum_{m,p\geq 0} \mathrm{tr}(g|({\textstyle\bigwedge^p}\UNsd)_{m})y^mu^p= \prod_{m\in\EN}\left(\sum_{p\geq 0}\mathrm{tr}(g|{\textstyle\bigwedge^p} \VN^*_m )y^{pm}u^p\right).\]

For each $m\in\ENs$ we have $\displaystyle\sum\limits_{p\geq 0}\left(\mathrm{tr}(g|{\textstyle\bigwedge^p}\VN^*_m)y^{pm}u^p\right)=\mathrm{det}(1+y^{m}ug|_{\VN^*_{m}}).$
 Therefore,
\begin{align*}
\hilb(x,y,u)&=\left(\sum_{\ell\geq 0}\mathrm{tr}(g|(S(\Vd)_\ell)x^\ell\right)\left(\sum_{m,p\geq 0}\mathrm{tr}(g|({\textstyle \bigwedge^p}\UNsd)_m)y^mu^p\right)\\
&=\frac{\prod_{m\in\ENs}\mathrm{det}(1+uy^{m}g^{-1}|_{\UNs_m})}{\det(1-xg^{-1}|_{V})}.\end{align*}
The result follows after taking the average over $G$ on each side.\qedhere
\end{proof}

The story is not quite so simple as just setting the sum over $G$ from~\Cref{thm:5.2} equal to the product, and then specializing.  The trouble is that in this specialized sum over $G$ from~\Cref{thm:5.2}, each element of $G$ \emph{does not} necessarily contribute the ``correct amount'' specified by the sum side of~\Cref{thm:main_theorem}---in particular, $(ng)|_{\VN}$ often has larger fixed space than $(ng)|_{\V}$, which causes many terms in the term-by-term limit to be zero.  It turns out, as we will now show, that the contributions \emph{are} correct when taken coset-by-coset.

\subsection{Sum Side, Coset-by-Coset I}
\label{sec:cosetii}
Fix some $g \in G$.  We find a product formula for \Cref{thm:5.2} restricted to the coset $gN$. Define \[\hilb_{gN}(x,y,u):=\frac{1}{|N|}\sum_{n \in N} \frac{\prod_{m\in\ENs} \det\left(1+u y^{m} (ng)|_{\VN^*_{m}}\right) }{\det(1-x(ng)|_{V^*})}.\]
Given $g\in G$, we can choose the fundamental $N$-invariants $\n_1,\dots,\n_r\in S(V^*)^G$ to also form a $g$-eigenbasis for $\VNd=\spn_\CC\{\n_1,\dots,\n_r\}$, since this space is $G$-stable and $g$ has finite order.  For $g \in G$, let $\epsilon_1^g,\dots,\epsilon_r^g$ denote the eigenvalues of $g$ on $\VNd$, so that $g\n_i=\epsilon_i^g\n_i$. 

\begin{proposition}
\label{prop:twisted_graded}
$\hilb_{gN}(x,y,u) = \displaystyle\prod\limits_{i=1}^r \frac{1+ \epsilon_i^g u y^{e_i^N(V)}}{1-\epsilon_i^g x^{d_i^N}}.$
\end{proposition}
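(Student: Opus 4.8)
The plan is to compute the Poincaré series $\hilb_{gN}(x,y,u)$ by exploiting the bigraded description of $(S(V^*)\otimes\bigwedge\VNd)^G$ from \Cref{rem:bigraded_solomon}, but now keeping track only of the $N$-invariance and the $g$-action rather than full $G$-invariance. First I would observe that, by the same reasoning as in the proof of \Cref{thm:5.2} applied to the group $N$ in place of $G$, the sum $\hilb_{gN}(x,y,u)$ is the $x,y,u$-graded trace of $g$ acting on the $N$-invariant ring $(S(V^*)\otimes\bigwedge\VNd)^N$; that is,
\[
\hilb_{gN}(x,y,u)=\sum_{\ell,m,p\geq 0}\mathrm{tr}\!\left(g\,\big|\,(S(V^*)_\ell\otimes({\textstyle\bigwedge^p}\VNd)_m)^N\right)x^\ell y^m u^p.
\]
The key structural input is \Cref{rem:bigraded_solomon}, which identifies $(S(V^*)\otimes\bigwedge\VNd)^N=(S(\VNd)\otimes\bigwedge\VNd)^H$ — but for the coset computation I only need the weaker, un-averaged statement that as a bigraded $\langle g\rangle$-module,
\[
(S(V^*)\otimes{\textstyle\bigwedge}\VNd)^N\;\simeq\;S(V^*)^N\otimes{\textstyle\bigwedge}U_N\;\simeq\;\CC[\n_1,\dots,\n_r]\otimes{\textstyle\bigwedge}\spn_\CC\{d\n_1,\dots,d\n_r\},
\]
which is exactly \Cref{thm:solomon} applied to $N$ acting on $V$. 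This is $g$-equivariant because the $\n_i$ were chosen (legitimately, since $\VNd$ is $G$-stable and $g$ has finite order) to be a $g$-eigenbasis of $\VNd$, and the $d\n_i$ then inherit the corresponding $g$-eigenvalues.

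Next I would track the three gradings on each side of this isomorphism. On the polynomial factor $\CC[\n_1,\dots,\n_r]$, the monomial $\n_1^{a_1}\cdots\n_r^{a_r}$ has $\mathbf{x}$-degree $\sum a_i d_i^N$, has $\mathbf{N}$-degree $\sum a_i$ (contributing to the $y$-grading via $m=\sum a_i(d_i^N-1)=\sum a_i e_i^N(V)$ under the shifted convention), and is a $g$-eigenvector with eigenvalue $\prod (\epsilon_i^g)^{a_i}$; summing the geometric series in each variable $\n_i$ contributes the factor $\frac{1}{1-\epsilon_i^g x^{d_i^N}}$ once one checks that the $y$-exponent bookkeeping matches (here is where the shifted decomposition $\VNd_m$ matters: the element $d\h_i$ — and more relevantly each $\n_i$ itself viewed inside the symmetric algebra — carries $y$-weight tied to $e_i^N(V)$ rather than $d_i^N$, but in fact for the \emph{symmetric} factor the relevant $y$-weight of $\n_i$ is $0$ after the shift, so the denominator factor is simply $1-\epsilon_i^g x^{d_i^N}$; I would verify this normalization carefully against \Cref{eq:p-series-defn}). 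On the exterior factor $\bigwedge\spn_\CC\{d\n_1,\dots,d\n_r\}$, the generator $d\n_i=\sum_j \frac{\partial\n_i}{\partial x_j}\otimes x_j$ lies in $S(V^*)_{e_i^N(V)}\otimes\VNd_{e_i^N(V)}$ — so it has $\mathbf{x}$-degree $e_i^N(V)=d_i^N-1$, shifted $y$-degree $e_i^N(V)$, and $u$-degree $1$ — and is a $g$-eigenvector with eigenvalue $\epsilon_i^g$ (since differentiation commutes with the linear $g$-action and $\n_i$ is an eigenvector); the exterior algebra on these $r$ generators contributes $\prod_i(1+\epsilon_i^g\, x^{e_i^N(V)}y^{e_i^N(V)}u)$. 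Wait — I need the $x$-weight on the numerator to combine with the eventual specialization correctly; comparing with \Cref{lem:product_side} (the case $g=1$, $\epsilon_i^g=1$) forces the numerator factor to be $1+\epsilon_i^g u y^{e_i^N(V)}$ with no $x$, which means the $\mathbf{x}$-grading contribution of $d\n_i$ must be absorbed — so in fact the correct reading is that $\hilb_{gN}$ as defined uses $\VN^*_m$ (not $S(V^*)\otimes\VN^*_m$) in the numerator determinant, i.e. only the $y$ and $u$ gradings appear on the exterior part, matching the definition literally. Multiplying the two contributions gives the claimed product.

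The main obstacle I anticipate is not the geometric-series computation but rather getting the three gradings bookkept consistently across the Solomon isomorphism — in particular reconciling the literal definition of $\hilb_{gN}$ (where the exterior part is graded by $y^m u^p$ via the shifted decomposition $\VNd_m$ and carries no independent $x$-grading) with the trace interpretation coming from \Cref{rem:bigraded_solomon} (where $U_N$ naturally sits inside $S(V^*)\otimes\VNd$ with a genuine $\mathbf{x}$-degree). The resolution is that $\VNd_m$ is precisely the $m$-shifted homogeneous piece, so $d\n_i\in(\bigwedge^1\VNd)_{e_i^N(V)}$ and its $y$-weight $e_i^N(V)$ already encodes what would otherwise be its $x$-weight; once this identification is pinned down the proof is a direct expansion. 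A secondary point requiring care is the justification that the $\n_i$ can be simultaneously chosen to be fundamental $N$-invariants, to have the $G$-stable span $\VNd$, \emph{and} to be a $g$-eigenbasis: this follows because $g$ has finite order so acts semisimply on the finite-dimensional $G$-stable space $\VNd$, and we may re-diagonalize within $\VNd$ without disturbing algebraic independence or the degrees $d_i^N$.
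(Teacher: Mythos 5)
Your argument is correct and follows essentially the same route as the paper: averaging $ng$ over the coset computes the graded trace of $g$ on the $N$-invariants (the paper's $g\circ\mathrm{pr}^N$ trick), the exterior factor is constant across the coset, and the symmetric factor is evaluated on the monomial $g$-eigenbasis of $\CC[\n_1,\dots,\n_r]$, giving $\prod_{i=1}^r(1-\epsilon_i^g x^{d_i^N})^{-1}$. One small caveat: the splitting $(S(V^*)\otimes\bigwedge\VNd)^N\simeq S(V^*)^N\otimes\bigwedge\VNd$ is not literally \Cref{thm:solomon} applied to $N$ (that theorem concerns $\bigwedge V^*$, not $\bigwedge\VNd$); it holds for the simpler reason that $N$ acts trivially on $\VNd$, which is also exactly why the numerator determinants are independent of $n$.
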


\begin{proof}
First, $\displaystyle\prod\limits_{m\in\ENs}\mathrm{det}(1+uy^{m}(ng)|_{\VN^*_{m}})=\displaystyle\prod\limits_{i=1}\limits^r(1+\epsilon_i^guy^{e_i^N(\Vs)})$ uniformly for any $n\in N$, since $\VN^*_m$ is $N$-invariant. It remains to show that \[{\frac{1}{|N|}\sum_{n\in N}\frac{1}{\mathrm{det}(1-x(ng)|_{\Vd})}=\prod_{i=1}^r\frac{1}{1-\epsilon_i^gx^{d_i^N}}}.\]

Since $S(\VN^*)\simeq\bigotimes_{m\in\mathcal{E}_N}\mathrm{Sym}(\VN^*_m),$ where $\VN^*_m$ denotes the span of fundamental $N$-invariants having $\mathbf{x}$-degree $m+1$ and $\mathrm{Sym}(\VN^*_m)$ denotes its symmetric algebra, we have
\[\sum_{\ell\geq 0}\mathrm{tr}\bigl(g|S(\VN^*)_\ell\bigr)x^\ell = \prod_{m\in\mathcal{E}_N}\Bigl(\sum_{\ell\geq 0}\bigl(\mathrm{tr}(g|\mathrm{Sym}^\ell(\VN^*_m))(x^{m+1})^\ell\bigr), \] where $S(\VN^*)_\ell:=S(\VN^*)\cap S(V^*)_\ell$ and $S(V^*)_\ell$ as before denotes the homogeneous subspace of polynomials of $\mathbf{x}$-degree $\ell$. On the other hand,
\[\prod_{m\in\mathcal{E}_N}\Bigl(\sum_{\ell\geq 0}\mathrm{tr}\bigl(g|\mathrm{Sym}^\ell(\VN^*_{m})\bigr)(x^{m+1})^\ell\Bigr)=\prod_{m\in\mathcal{E}_N}\frac{1}{\mathrm{det}(1-x^{m+1}(g|_{\VN^*_{m}}))}=\prod_{i=1}^r\frac{1}{1-\epsilon_i^gx_i^{d_i^N}},\]
since $d_i^N=e_i^N(V)$. Therefore, $\displaystyle\sum\limits_{\ell\geq 0}\mathrm{tr}\bigl(g|S(\VN^*)_\ell\bigr)x^\ell=\displaystyle\prod\limits_{i=1}^r\frac{1}{1-\epsilon_i^gx_i^{d_i^N}}.$ Since for each $n\in N$ we have $\displaystyle\sum\limits_{\ell\geq 0}\mathrm{tr}\bigl(ng|S(V^*)_\ell\bigr)x^\ell = \frac{1}{\mathrm{det}(1-x(ng|_{V^*}))},$ it remains to show that, for each $\ell\geq 0$, \[\frac{1}{|N|}\sum_{n\in N}\Bigl(\mathrm{tr}\bigl(ng|S(V^*)_\ell\bigr)\Bigr)=\mathrm{tr}\bigl(g|S(\VN^*)_\ell\bigr).\]
To see this, note that the operator $\frac{1}{|N|}\displaystyle\sum\limits_{n\in N}ng=g\cdot\left(\frac{1}{|N|}\displaystyle\sum\limits_{n\in N}n\right)=g\circ\mathrm{pr}^N_\ell,$ where $\mathrm{pr}^N_\ell=\frac{1}{|N|}\sum_{n\in N} n$ is the projection from $S(V^*)_\ell$ onto its $g$-stable subspace $S(V^*)_\ell^N=S(E^*)_\ell$, whence $\mathrm{tr}\bigl((g\circ\mathrm{pr}^N_\ell)|S(V^*)_\ell\bigr)=\mathrm{tr}\bigl(g|S(E^*)_\ell\bigr)$. \end{proof}

\subsection{Sum Side, Coset-by-Coset II}
\label{sec:coseti}
We next specialize some results of \cite{bonnafe2006twisted} to the case when $N$ is a normal reflection subgroup of a complex reflection group $G$. They consider the more general situation when $N$ is translated by an arbitrary element in the normalizer of $N$ in $\mathrm{GL}(V)$.  Continue to fix some $g \in G$. 

\begin{proposition}[{\cite{bonnafe2006twisted}}]
\label{thm:twisted_os}
\[\frac{1}{|N|}\displaystyle\sum\limits_{n \in N} \frac{\det(1+u (ng)|_\Vd)}{\det(1-x (ng)|_\Vd)} = \displaystyle\prod\limits_{i=1}^r \frac{1+ \epsilon_i^g u x^{e_i^N(V)}}{1-\epsilon_i^g x^{d_i^N}} \left( = \hilb_{gN}(x,x,u)\right).\]
\end{proposition}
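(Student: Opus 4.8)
The plan is to prove the first equality directly, by computing the bigraded Poincar\'e series of $(S(V^*)\otimes\bigwedge V^*)^N$ both as an average over $N$ and as a product, reusing the averaging argument from the proof of \Cref{prop:twisted_graded} together with the full force of \Cref{thm:solomon} applied to $N$; the parenthetical equality $\hilb_{gN}(x,x,u)=\prod_{i=1}^r\frac{1+\epsilon_i^g u x^{e_i^N(V)}}{1-\epsilon_i^g x^{d_i^N}}$ is then immediate from \Cref{prop:twisted_graded} upon setting $y=x$. (Equivalently, since every $g\in G$ normalizes the normal subgroup $N$ inside $\mathrm{GL}(V)$, this is the twisted Orlik--Solomon formula of \cite{bonnafe2006twisted} specialized to our restricted setting; what follows is that specialization made self-contained.)

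First I would expand the left-hand side using $\det(1+u(ng)|_{V^*})=\sum_{p\geq0}u^p\,\mathrm{tr}((ng)|{\textstyle\bigwedge^p}V^*)$ and $\det(1-x(ng)|_{V^*})^{-1}=\sum_{\ell\geq0}x^\ell\,\mathrm{tr}((ng)|S(V^*)_\ell)$, obtaining
\[\frac{1}{|N|}\sum_{n\in N}\frac{\det(1+u(ng)|_{V^*})}{\det(1-x(ng)|_{V^*})}=\sum_{\ell,p\geq0}x^\ell u^p\cdot\frac{1}{|N|}\sum_{n\in N}\mathrm{tr}\bigl((ng)|S(V^*)_\ell\otimes{\textstyle\bigwedge^p}V^*\bigr).\]
Exactly as in the proof of \Cref{prop:twisted_graded}, normality of $N$ gives $\frac{1}{|N|}\sum_{n\in N}ng=g\circ\mathrm{pr}^N$ as an operator on the $G$-module $S(V^*)_\ell\otimes\bigwedge^p V^*$, where $\mathrm{pr}^N=\frac{1}{|N|}\sum_{n\in N}n$ is the projection onto $N$-invariants; hence the inner averaged trace equals $\mathrm{tr}\bigl(g|(S(V^*)_\ell\otimes{\textstyle\bigwedge^p}V^*)^N\bigr)$.

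Next I would invoke \Cref{thm:solomon} for $N$ in its bigraded, $G$-equivariant form: the multiplication map identifies $(S(V^*)\otimes\bigwedge V^*)^N$ with $S(V^*)^N\otimes\bigwedge U_N$, where $U_N=\spn_\CC\{d\n_1,\dots,d\n_r\}$, in such a way that $\n_i$ carries $\mathbf{x}$-degree $d_i^N$ and exterior degree $0$ while $d\n_i$ carries $\mathbf{x}$-degree $e_i^N(V)=d_i^N-1$ and exterior degree $1$. Since the $\n_i$ were already chosen to diagonalize $g$ on $\VNd$ with $g\n_i=\epsilon_i^g\n_i$, we also have $g\cdot d\n_i=d(g\n_i)=\epsilon_i^g\,d\n_i$, so $g$ acts diagonally on the monomial basis of $S(V^*)^N\otimes\bigwedge U_N$: it scales $\n_1^{a_1}\cdots\n_r^{a_r}$ by $\prod_i(\epsilon_i^g)^{a_i}$ and $d\n_{i_1}\wedge\cdots\wedge d\n_{i_p}$ by $\prod_j\epsilon_{i_j}^g$. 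Summing these contributions over all exponent vectors $(a_1,\dots,a_r)$ with $a_i\geq0$ and over all subsets $\{i_1<\dots<i_p\}\subseteq\{1,\dots,r\}$ factors the generating function:
\[\sum_{\ell,p\geq0}x^\ell u^p\,\mathrm{tr}\bigl(g|(S(V^*)_\ell\otimes{\textstyle\bigwedge^p}V^*)^N\bigr)=\prod_{i=1}^r\Bigl(\sum_{a\geq0}(\epsilon_i^g x^{d_i^N})^a\Bigr)\bigl(1+\epsilon_i^g u x^{e_i^N(V)}\bigr)=\prod_{i=1}^r\frac{1+\epsilon_i^g u x^{e_i^N(V)}}{1-\epsilon_i^g x^{d_i^N}},\]
which is the product side; combined with the previous paragraph this gives the first equality.

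The one point requiring genuine attention --- rather than a serious obstacle --- is that one must apply \Cref{thm:solomon} to $N$ itself (not to the ambient $G$), and in a form that is simultaneously $G$-equivariant and bigraded for the two gradings tracked above (the $\mathbf{x}$-degree read off the $S(V^*)$-factor, and the exterior degree $p$), whereas the theorem as quoted asserts only an abstract isomorphism of algebras. This is routine: the isomorphism is realized by the multiplication map $f\otimes\omega\mapsto f\cdot\omega$, which visibly commutes with the $G$-action and carries each $\n_1^{a_1}\cdots\n_r^{a_r}\otimes d\n_{i_1}\wedge\cdots\wedge d\n_{i_p}$ to a bihomogeneous element of exactly the predicted bidegree, after which everything reduces to the bookkeeping above. (Alternatively one may cite \cite{bonnafe2006twisted} directly, observing that the eigenvalues $\epsilon_i^g$ and the invariants $d_i^N$ and $e_i^N(V)$ appearing there coincide with ours by construction.)
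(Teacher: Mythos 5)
Your argument is correct. Note, though, that the paper itself does not prove \Cref{thm:twisted_os} at all: it is quoted as a black box from \cite{bonnafe2006twisted} (where it is established for an arbitrary element of the normalizer of $N$ in $\mathrm{GL}(V)$, not just $g\in G$), and the parenthetical identification with $\hilb_{gN}(x,x,u)$ is, as you say, just \Cref{prop:twisted_graded} at $y=x$. What you have supplied is a self-contained proof in the restricted setting $g\in G$, and it is assembled from exactly the ingredients the paper already has on hand: the expansion of $\det(1+uA)/\det(1-xA)$ into traces on $\bigwedge^p V^*\otimes S(V^*)_\ell$, the normality trick $\frac{1}{|N|}\sum_{n\in N}ng=g\circ\mathrm{pr}^N$ (which the paper uses verbatim in the proof of \Cref{prop:twisted_graded}, there only for the symmetric factor), and Solomon's freeness theorem (\Cref{thm:solomon}) applied to $N$ so that $g$ acts diagonally on the monomial basis $\n^a\, d\n_{i_1}\wedge\cdots\wedge d\n_{i_p}$ with eigenvalue $\prod_i(\epsilon_i^g)^{a_i}\prod_j\epsilon_{i_j}^g$. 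The one point of genuine care --- that the Solomon isomorphism is realized by multiplication, hence is bigraded and $G$-equivariant (using $g\cdot d\n_i=d(g\n_i)=\epsilon_i^g\,d\n_i$), so that the monomials of fixed bidegree form a $g$-eigenbasis of $(S(V^*)_\ell\otimes\bigwedge^p V^*)^N$ --- is exactly the point you flag and resolve correctly. What your route buys is independence from the external reference; what the paper's route buys is brevity and the extra generality of \cite{bonnafe2006twisted}. There is no circularity in invoking \Cref{prop:twisted_graded} for the parenthetical equality, since its proof in the paper does not rely on \Cref{thm:twisted_os}.
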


Specializing both sides of~\Cref{thm:twisted_os} to $u=q(1-x)-1$ and then taking the limit $x \to 1$ yields the following simple formula for sums over cosets.

\begin{corollary}[{\cite{bonnafe2006twisted}}]
\label{cor:twisted_os} 
\[\displaystyle\sum\limits_{n \in N} q^{\fix_V(ng)} = \left(\displaystyle\prod\limits_{\epsilon^g_i =1} q+e_i^N(V)\right) \left(\displaystyle\prod\limits_{\epsilon_i^g \neq 1} d_i^N\right).\]
\end{corollary}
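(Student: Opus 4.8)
The plan is to start from the rational-function identity in \Cref{thm:twisted_os}, set $u=q(1-x)-1$, and analyze the limit as $x\to1$ factor by factor on the right-hand side. Write the product side as $\prod_{i=1}^r \frac{1+\epsilon_i^g u x^{e_i^N(V)}}{1-\epsilon_i^g x^{d_i^N}}$. After the substitution $u = q(1-x)-1$ the numerator of the $i$-th factor becomes $1 + \epsilon_i^g\bigl(q(1-x)-1\bigr)x^{e_i^N(V)} = \bigl(1 - \epsilon_i^g x^{e_i^N(V)}\bigr) + \epsilon_i^g q(1-x)x^{e_i^N(V)}$, so the $i$-th factor is
\[
\frac{1 - \epsilon_i^g x^{e_i^N(V)}}{1 - \epsilon_i^g x^{d_i^N}} + \frac{\epsilon_i^g q(1-x)x^{e_i^N(V)}}{1 - \epsilon_i^g x^{d_i^N}}.
\]

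The key step is a case split on whether $\epsilon_i^g = 1$. If $\epsilon_i^g \neq 1$, then both $1-\epsilon_i^g x^{e_i^N(V)}$ and $1-\epsilon_i^g x^{d_i^N}$ are nonzero at $x=1$ (their common value is $1-\epsilon_i^g \neq 0$), and the second summand vanishes in the limit because of the factor $(1-x)$; hence the $i$-th factor tends to $\frac{1-\epsilon_i^g}{1-\epsilon_i^g} = 1$. Wait — that would kill the $d_i^N$ contribution, so I need to be more careful: the correct bookkeeping is to first clear denominators. Multiply through: the $i$-th factor equals $\frac{1-\epsilon_i^g x^{e_i^N(V)} + \epsilon_i^g q(1-x)x^{e_i^N(V)}}{1-\epsilon_i^g x^{d_i^N}}$, and since $d_i^N = e_i^N(V)+1$ (as $d_i^N = e_i^N(V)+1$ by the exponent convention used throughout, cf.\ the proof of \Cref{prop:twisted_graded} where $d_i^N = e_i^N(V)$ is used with a shift), one factors $1-\epsilon_i^g x^{d_i^N} = (1-x) \cdot \frac{1-\epsilon_i^g x^{d_i^N}}{1-x}$ when $\epsilon_i^g=1$, i.e.\ $1 - x^{d_i^N} = (1-x)(1 + x + \dots + x^{d_i^N - 1})$, which tends to $(1-x)\cdot d_i^N$; and similarly $1 - x^{e_i^N(V)}$ in the numerator is $(1-x)(1+\dots+x^{e_i^N(V)-1}) \to (1-x)\cdot e_i^N(V)$. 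So for $\epsilon_i^g = 1$ the $i$-th factor is $\frac{(1-x)e_i^N(V) + q(1-x)}{(1-x)d_i^N}\bigl(1+o(1)\bigr) \to \frac{q + e_i^N(V)}{d_i^N}$. For $\epsilon_i^g \neq 1$ the $i$-th factor tends to $\frac{1-\epsilon_i^g}{1-\epsilon_i^g} = 1$.

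Thus the limit of the product side is $\prod_{\epsilon_i^g=1}\frac{q+e_i^N(V)}{d_i^N}$, whereas the claimed right-hand side is $\bigl(\prod_{\epsilon_i^g=1}(q+e_i^N(V))\bigr)\bigl(\prod_{\epsilon_i^g\neq1}d_i^N\bigr)$. These disagree unless I track the $d_i^N$ factors correctly — the resolution is that one should \emph{not} substitute $u=q(1-x)-1$ naively but rather $u = qt(1-x)-1$ as in \Cref{cor:prod_side_specialization}, or equivalently absorb a normalizing power: the honest statement is that $\prod_{i=1}^r d_i^N$ times the limit recovers the claim, i.e.\ one multiplies the identity of \Cref{thm:twisted_os} by $\prod_i(1-\epsilon_i^g x^{d_i^N})$... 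Actually the cleanest route, and the one I would write up, is: on the sum side, $\lim_{x\to1}\sum_{n\in N}x^{\fix_{V^*}(ng)}\cdot(\text{specialized numerator})$ evaluates, by the standard argument specializing $u=q(1-x)-1$ in $\frac{\det(1+u(ng)|_{V^*})}{\det(1-x(ng)|_{V^*})}$, to $q^{\fix_V(ng)}$ for each $n$ — this is exactly the untwisted Shephard–Todd specialization applied pointwise, which is legitimate because the sum is finite. So the left side of \Cref{cor:twisted_os} is the $x\to1$ limit of $|N|$ times the left side of \Cref{thm:twisted_os} after the $u$-substitution. Matching this against the factorwise limit of the product side computed above — being careful that the $|N| = \prod_i d_i^N$ prefactor (by \Cref{thm:shephard_todd}) exactly cancels the $\prod_{\epsilon_i^g=1}\frac{1}{d_i^N}$ and converts the remaining $\prod_{\epsilon_i^g\neq1}$ factors, which contributed $1$ before the $|N|$-scaling, into $\prod_{\epsilon_i^g\neq1}d_i^N$ — gives the stated identity.

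The main obstacle is precisely this bookkeeping of the $|N|$ and $d_i^N$ factors: one must be scrupulous about whether the rational identity in \Cref{thm:twisted_os} already carries the $\frac1{|N|}$ and then arrange the limit so that $|N|=\prod d_i^N$ splits as $\prod_{\epsilon_i^g=1}d_i^N \cdot \prod_{\epsilon_i^g\neq1}d_i^N$, with the first product cancelling the denominators coming from L'Hôpital-type limits $\lim_{x\to1}\frac{1-x^{d}}{1-x}=d$ and the second surviving untouched. Everything else — the factorization $1-x^d=(1-x)(1+\dots+x^{d-1})$, the vanishing of $(1-x)$-multiples, and the pointwise untwisted specialization $\frac{\det(1+u g)}{\det(1-xg)}\big|_{u=q(1-x)-1}\xrightarrow{x\to1}q^{\dim\ker(1-g)}$ — is routine and already implicit in the passage from \Cref{eq:solomon} to \Cref{thm:OS-untwisted}.
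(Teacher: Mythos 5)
Your proposal is correct and follows the same route as the paper: specialize $u=q(1-x)-1$ in \Cref{thm:twisted_os}, take the factorwise limit $x\to 1$ (obtaining $\frac{q+e_i^N(V)}{d_i^N}$ when $\epsilon_i^g=1$ and $1$ when $\epsilon_i^g\neq 1$), and clear the $\frac{1}{|N|}$ prefactor using $|N|=\prod_{i=1}^r d_i^N$ so that the surviving $d_i^N$ with $\epsilon_i^g\neq 1$ appear on the right-hand side. After your self-correction the bookkeeping is exactly right, so nothing is missing.
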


Using~\Cref{cor:twisted_os}, we obtain the following crucial specialization of \Cref{prop:twisted_graded}, exploiting the fact that \Cref{prop:twisted_graded} gives the series $\hilb_{gN}(x,y,u)$, while \Cref{thm:twisted_os} gives the series $\hilb_{gN}(x,x,u)$.

\begin{corollary}
\label{cor:twisted_graded}
$\lim_{x\to 1} \hilb_{gN}(x,x^t,qt(1-x)-1) = t^{\fix_E g}\sum_{n \in N} q^{\fix_V (ng)}.$
\end{corollary}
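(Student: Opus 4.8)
The plan is to start from the product formula
\[\hilb_{gN}(x,y,u)=\prod_{i=1}^r\frac{1+\epsilon_i^g u y^{e_i^N(V)}}{1-\epsilon_i^g x^{d_i^N}}\]
of \Cref{prop:twisted_graded} and perform the double specialization $y=x^t$, $u=qt(1-x)-1$ in two stages, isolating the role played by the eigenvalue $t$-power $t^{\fix_E g}$ before invoking \Cref{cor:twisted_os}. First I would substitute $y=x^t$ and $u=qt(1-x)-1$ into each factor, obtaining
\[\prod_{i=1}^r\frac{1+\epsilon_i^g\bigl(qt(1-x)-1\bigr)x^{t\,e_i^N(V)}}{1-\epsilon_i^g x^{d_i^N}}.\]
The key observation is that the limit behaviour of the $i$-th factor as $x\to1$ is dictated entirely by whether $\epsilon_i^g=1$: when $\epsilon_i^g\neq1$ the denominator tends to the nonzero constant $1-\epsilon_i^g$ and the numerator to $1-\epsilon_i^g$ (since the bracket $\to-1$ and $x^{te_i^N(V)}\to1$), so that factor contributes $1$ in the limit, whereas when $\epsilon_i^g=1$ both numerator and denominator vanish to first order and one must extract the finite limit by a first-order expansion.

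The second step is to analyze precisely the indices with $\epsilon_i^g=1$. Since $\VNd=\spn_\CC\{\n_1,\dots,\n_r\}$ is $G$-stable and $g\n_i=\epsilon_i^g\n_i$, the number of indices $i$ with $\epsilon_i^g=1$ is exactly $\dim(\ker(1-g|_{\VNd}))=\fix_E g$ (identifying $\fix$ on $E$ with the corresponding fixed-space dimension on $\VNd$). For such an index, write the $i$-th factor as
\[\frac{1+\bigl(qt(1-x)-1\bigr)x^{t\,e_i^N(V)}}{1-x^{d_i^N}},\]
and expand numerator and denominator in powers of $(1-x)$: the denominator is $d_i^N(1-x)+O((1-x)^2)$, and the numerator is $1-x^{te_i^N(V)}+qt(1-x)x^{te_i^N(V)} = \bigl(t\,e_i^N(V)+qt\bigr)(1-x)+O((1-x)^2)$. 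Hence the $i$-th factor tends to $\dfrac{t\,(q+e_i^N(V))}{d_i^N}$. Collecting all factors, the full limit equals
\[\Bigl(\prod_{\epsilon_i^g=1} t\,(q+e_i^N(V))\Bigr)\Bigl(\prod_{\epsilon_i^g\neq1}1\Bigr)\Big/\Bigl(\prod_{\epsilon_i^g=1} 1\Bigr)\cdot\frac{1}{\prod_{\epsilon_i^g\neq 1}(1-\epsilon_i^g)}\cdot\Bigl(\prod_{\epsilon_i^g\neq1}(1-\epsilon_i^g)\Bigr),\]
which after cancelling the $\epsilon_i^g\neq1$ contributions simplifies to $t^{\fix_E g}\prod_{\epsilon_i^g=1}\dfrac{q+e_i^N(V)}{1}\cdot\dfrac{1}{\prod_{\epsilon_i^g=1}1}$; more cleanly, $t^{\fix_E g}\cdot\dfrac{\prod_{\epsilon_i^g=1}(q+e_i^N(V))}{\text{(the }d_i^N\text{ that survive)}}$. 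To match the target I would then recall \Cref{cor:twisted_os}, which states $\sum_{n\in N}q^{\fix_V(ng)}=\bigl(\prod_{\epsilon_i^g=1}(q+e_i^N(V))\bigr)\bigl(\prod_{\epsilon_i^g\neq1}d_i^N\bigr)$: this is not literally what falls out of the factor-by-factor limit, so the real content of the lemma is reconciling the two.

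The main obstacle, and the point requiring care, is exactly this reconciliation: the naive factor-by-factor limit of \Cref{prop:twisted_graded}'s product gives $t^{\fix_E g}\prod_{\epsilon_i^g=1}(q+e_i^N(V))$ divided by $\prod_{\epsilon_i^g=1}d_i^N$, whereas \Cref{cor:twisted_os} multiplies by $\prod_{\epsilon_i^g\neq1}d_i^N$ rather than dividing by $\prod_{\epsilon_i^g=1}d_i^N$. The discrepancy is resolved by observing that $\hilb_{gN}(x,x,u)$ from \Cref{thm:twisted_os} and $\hilb_{gN}(x,x^t,u)$ from \Cref{prop:twisted_graded} differ only in the $y$-slot, so the cleanest route is: take the limit of $\hilb_{gN}(x,x^t,qt(1-x)-1)$ directly, note that setting $t=1$ recovers $\lim_{x\to1}\hilb_{gN}(x,x,q(1-x)-1)=\sum_{n\in N}q^{\fix_V(ng)}$ by \Cref{cor:twisted_os}, and observe that the dependence on $t$ in the limit is purely the overall monomial $t^{\fix_E g}$ — precisely because each $\epsilon_i^g=1$ factor contributes one power of $t$ and no $\epsilon_i^g\neq1$ factor contributes any $t$ in the limit (its numerator $\to1-\epsilon_i^g$ independently of $t$). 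Thus $\lim_{x\to1}\hilb_{gN}(x,x^t,qt(1-x)-1)=t^{\fix_E g}\cdot\lim_{x\to1}\hilb_{gN}(x,x,q(1-x)-1)=t^{\fix_E g}\sum_{n\in N}q^{\fix_V(ng)}$, which is the claim. I would present the argument in this factored form to avoid the bookkeeping pitfall of separately tracking $d_i^N$ numerators and denominators.
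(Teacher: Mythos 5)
Your computation is correct and follows exactly the route the paper intends: \Cref{cor:twisted_graded} is meant to be the term-by-term $x\to 1$ limit of the product in \Cref{prop:twisted_graded}, with the factors at $\epsilon_i^g\neq 1$ tending to $1$, each factor at $\epsilon_i^g=1$ tending to $t(q+e_i^N(V))/d_i^N$, and \Cref{cor:twisted_os} identifying the $t$-independent part. The one place you wobble --- the ``reconciliation'' of a denominator $\prod_{\epsilon_i^g=1}d_i^N$ against the multiplier $\prod_{\epsilon_i^g\neq 1}d_i^N$ appearing in \Cref{cor:twisted_os} --- does not actually require your $t=1$ detour: the two expressions differ by exactly $\prod_{i=1}^r d_i^N=|N|$ (\Cref{thm:shephard_todd} applied to $N$), which is the averaging prefactor $\tfrac{1}{|N|}$ built into the definition of $\hilb_{gN}$. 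So the factor-by-factor limit literally yields $t^{\fix_E g}\prod_{\epsilon_i^g=1}\tfrac{q+e_i^N(V)}{d_i^N}=\tfrac{t^{\fix_E g}}{|N|}\sum_{n\in N}q^{\fix_V(ng)}$; the paper silently drops this normalizing factor here (and again drops $\tfrac{1}{|G|}$ in \Cref{eq:sum_eq_product}, so the conventions cancel in the proof of \Cref{thm:main_theorem}), and your $t=1$ factorization inherits the same convention, so your conclusion matches the statement as printed. Your observation that the $t$-dependence of the limit is the pure monomial $t^{\fix_E g}$, with $\fix_E g=\#\{i:\epsilon_i^g=1\}$ because the $\n_i$ form a $g$-eigenbasis of $\VNd$, is the essential content and is right.
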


\subsection{Proof of Theorem \ref{thm:main_theorem}}  We now prove our main theorem.
\begin{proof}[Proof of~\Cref{thm:main_theorem}] 
  Equating the formulas from~\Cref{lem:product_side,thm:5.2} gives
\begin{align}\label{eq:sum_eq_product}
\hilb(x,y,u)=\sum_{g \in G} \frac{\prod_{m\in\ENs}\mathrm{det}(1+uy^{m}g|_{\VN_m})}{\det(1-xg|_{V})} = |G|\prod_{i=1}^r\frac{1+x^{e_i^G(\UNs)}y^{e_i^N(V)}u}{1-x_i^{d_i^G}}. \end{align}

Let $\{g_j\}_{j=1}^{|H|}$ be a set of coset representatives for $N$ in $G$. By~\Cref{cor:twisted_graded}, splitting the sum side of~\Cref{eq:sum_eq_product} into a sum over the cosets of $N$ and specializing gives
\begin{align*}
\lim_{x\to 1} \hilb(x,x^t,qt(1-x)-1)&= \sum_{j=1}^{|H|}\lim_{x\to 1} \hilb_{gN}(x,x^t,qt(1-x)-1) \\ &= \sum_{j=1}^{|H|} t^{\fix_{E} g_j}\sum_{n \in N} q^{\fix_V ng_j} = \sum_{g \in G} q^{\fix_V g} t^{\fix_{E} g}.
\end{align*}

The result now follows from~\Cref{eq:sum_eq_product} by equating this specialization of the sum side with the same specialization of the product side from~\Cref{cor:prod_side_specialization}.
\end{proof}

\section{Classification of Normal Reflection Subgroups}
\label{sec:classification}
In the interest of space, we restrict our classification of normal reflection subgroups to $\mathrm{rank} \geq 3$.  In rank 2, there are two connected posets of imprimitive complex reflection groups ordered by normality: one has maximal element $G_{11}$ and minimal elements $G(4,2,2)$, $G_4$, and $G_{12}$, while the other has maximal element $G_{19}$ and minimal elements $G_{16}$, $G_{20}$, and $G_{22}$.

\begin{theorem}[{\cite[Corollary 2.18]{lehrer2009unitary}}]
For $r\geq 3$, the normal reflection subgroups of $G(ab,b,r)$ are $(C_d)^r$ and  $G(ab,db,r)$ for $d|a$, giving quotients $G(ab,b,r)/(C_d)^r = G((a/d) b,b,r)$ and $G(ab,b,r)/G(ab,db,r) = C_d$.
\end{theorem}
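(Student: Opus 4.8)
The plan is to argue directly from the monomial-matrix description of $W := G(ab,b,r)$, writing $W = A(ab,b,r)\rtimes\mathfrak{S}_r$, where $A(ab,b,r)$ is the group of diagonal matrices with entries in $\mu_{ab}$ whose product of entries lies in $\mu_a$. First I would record the list of reflections of $W$: the diagonal reflections $\theta_i(\zeta)=\mathrm{diag}(1,\dots,\zeta,\dots,1)$ with $\zeta\in\mu_a\setminus\{1\}$ in the $i$th slot, and the twisted transpositions $t_{ij}^{(\zeta)}$ ($\zeta\in\mu_{ab}$) sending $e_i\mapsto\zeta e_j$, $e_j\mapsto\zeta^{-1}e_i$ and fixing the remaining coordinates. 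That these are all the reflections follows from a short eigenvalue/rank computation: a monomial matrix whose permutation part has a cycle of length $\geq 3$, or two or more disjoint transpositions, cannot have $1$ as an eigenvalue of multiplicity $r-1$.

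Next I would compute the conjugacy classes of reflections, which is the one place the hypothesis $r\geq 3$ is essential. Conjugating $t_{12}^{(\zeta)}$ by $\mathrm{diag}(\alpha_1,\dots,\alpha_r)\in A(ab,b,r)$ produces $t_{12}^{(\zeta\alpha_2/\alpha_1)}$; when $r\geq 3$ we may take $\alpha_1=1$, $\alpha_2=\beta$, $\alpha_3=\beta^{-1}$ and the rest $1$, so $\zeta$ can be scaled by an arbitrary $\beta\in\mu_{ab}$ while respecting the product constraint. Combined with $\mathfrak{S}_r$-conjugation, this shows all twisted transpositions form a single class $T$. Conjugation permutes the slot of a diagonal reflection but preserves its eigenvalues, so the diagonal reflections fall into classes $\Theta_\zeta=\{\theta_i(\zeta):1\leq i\leq r\}$ indexed by $\zeta\in\mu_a\setminus\{1\}$.

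The structural point is that a normal subgroup of $W$ generated by reflections is exactly the subgroup generated by the union of the reflection conjugacy classes it meets, and conversely the subgroup generated by any union of reflection classes is automatically normal; so it suffices to compute $\langle\mathcal R\rangle$ as $\mathcal R$ ranges over unions of $T$ and the $\Theta_\zeta$. If $T\not\subseteq\mathcal R$, then $\langle\mathcal R\rangle$ lies in the diagonal torus and is visibly $(\mu_d)^r=(C_d)^r$, where $\mu_d$ is the subgroup of $\mu_a$ generated by the $\zeta$ with $\Theta_\zeta\subseteq\mathcal R$ (so $d\mid a$). If $T\subseteq\mathcal R$, then the products $t_{12}^{(\zeta)}t_{12}^{(1)}=\mathrm{diag}(\zeta^{-1},\zeta,1,\dots,1)$ together with $\mathfrak S_r$ generate $G(ab,ab,r)$ by a telescoping argument, and adjoining the available $\theta_i(\zeta)$ enlarges the torus part to all diagonals of $\mu_{ab}$-entries with product in $\mu_{d'}$, where $\mu_{d'}=\langle\zeta:\Theta_\zeta\subseteq\mathcal R\rangle$; that is, $\langle\mathcal R\rangle=G(ab,(a/d')b,r)=G(ab,db,r)$ with $d=a/d'\mid a$. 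This exhausts the normal reflection subgroups. Finally I would read off the quotients by comparing diagonal parts: $W/(C_d)^r$ retains $\mathfrak S_r$ and has torus $\{(\beta_i)\in\mu_{ab/d}^r:\prod\beta_i\in\mu_{a/d}\}$, so equals $G((a/d)b,b,r)$; while $W/G(ab,db,r)\cong A(ab,b,r)/A(ab,db,r)\cong\mu_a/\mu_{a/d}\cong C_d$ via the ``product of entries'' homomorphism $W\to\mu_a$.

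I expect the main obstacle to be the bookkeeping in the two cases — identifying precisely which diagonal matrices are generated (the telescoping argument for product-one diagonals, and tracking how the product constraint interacts with the adjoined $\theta_i(\zeta)$) — together with confirming exhaustiveness, for which the observation that a normal reflection subgroup equals the group generated by the reflection classes it contains is the crucial ingredient.
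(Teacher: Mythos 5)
Your argument is correct, but note that the paper does not actually prove this statement: it is quoted verbatim from Lehrer--Taylor \cite[Corollary 2.18]{lehrer2009unitary}, and the only ingredient the paper itself supplies is the remark in the introduction that normal reflection subgroups are obtained by taking unions of conjugacy classes of reflections. Your proposal turns that remark into a complete, self-contained verification, and it holds up: the inventory of reflections (diagonal reflections $\theta_i(\zeta)$, $\zeta\in\mu_a\setminus\{1\}$, and twisted transpositions $t_{ij}^{(\zeta)}$) is right; the observation that for $r\geq 3$ the element $\mathrm{diag}(1,\beta,\beta^{-1},1,\dots,1)$ lies in $G(ab,b,r)$ and rescales the parameter of $t_{12}^{(\zeta)}$ by an arbitrary $\beta\in\mu_{ab}$ correctly isolates where the rank hypothesis enters (in rank $2$ the twisted transpositions split into several classes, which is exactly why the paper treats rank $2$ separately); the reduction of the classification to subgroups generated by unions of the classes $T$ and $\Theta_\zeta$ is sound in both directions; and the two case computations, including the telescoping generation of $A(ab,ab,r)$ and the identification $d=a/d'$, land precisely on the list $(C_d)^r$ and $G(ab,db,r)$ for $d\mid a$. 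The quotient identifications via the semidirect product structure and the product-of-entries homomorphism $G(ab,b,r)\to\mu_a$ are also correct. What your route buys is an elementary, fully explicit proof in place of a citation; what the paper's route buys is brevity and the more general context of \cite{lehrer2009unitary}, where the same statement is derived alongside the structure theory of the imprimitive groups.
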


As $G_{26}$ and $G_{28}$ are the only exceptional reflection groups with more than a single orbit of reflections, there are three nontrivial exceptional (that is, not imprimitive) examples of normal reflection subgroups in rank greater than two: $G(3,3,3) \triangleleft G_{26}$, with quotient $G_4$; $G_{25} \triangleleft G_{26}$, with quotient $C_2$;  and $G(2,2,4) \triangleleft G_{28} \simeq W(F_4)$, with quotient $\mathfrak{S}_3$.

\section{Reflexponents}
\label{sec:reflexponents}
Fix $G$ a complex reflection group of rank $r$ with reflection representation $V$.  Call an $r$-dimensional representation $M$ of $G$ \defn{factorizing} if $M$ has dimension $r$ and
\[\sum_{g \in G} q^{\mathrm{fix}_V(g)} t^{\mathrm{fix}_M(g)} = \prod_{i=1}^r \Big(qt+(e_i^G(V)-m_i)t+ m_i\Big),\] for some nonnegative integers $m_1,\ldots,m_r$.  More generally, call a representation $M$ of $G$ of dimension $\dim M \leq r$ factorizing if it is factorizing in the above sense after adding in $r-\dim M$ copies of the trivial representation.

We can now give a uniform explanation for certain ad-hoc identities from~\cite{williams2019reflexponents}.  Let $\mathcal{H}$ be an orbit of reflecting hyperplanes, write $\mathcal{R}_\mathcal{H}$ for the set of reflections fixing some $L \in \mathcal{H}$, and let $N_\mathcal{H} = \left\langle \mathcal{R}_\mathcal{H} \right\rangle$ be the subgroup generated by reflections around hyperplanes in $\mathcal{H}$.  Since these reflections form a conjugacy class in $G$, $N_\mathcal{H}$ is a normal reflection subgroup of $G$. Furthermore: the quotient $G/N_\mathcal{H}$ acts as a reflection group on the $N_\mathcal{H}$-invariants of $V$; and this action gives a $G$-representation $M_\mathcal{H}$ that is factorizing by~\Cref{thm:main_theorem}.

\section{Galois Twists}\label{sec:galois}

Let $V$ be an $r$-dimensional complex vector space and $G\subset\mathrm{GL}(V)$ be a complex reflection group. It is known that $G$ can be realized over $\mathbb{Q}(\zeta_G)$, where $\zeta_G$ is a primitive $|G|$-th root of unity, in the sense that there is a basis for $V$ with respect to which $G\subset \mathrm{GL}_r(\mathbb{Q}(\zeta_G))$. For $\sigma\in\mathrm{Gal}(\mathbb{Q}(\zeta_G)/\mathbb{Q})$, the \defn{Galois twist} $V^\sigma$ is the representation of $G$ on the same underlying vector space $V$ obtained by applying $\sigma$ to the matrix entries of $g\in\mathrm{GL}(\mathbb{Q}(\zeta_G))$. Orlik and Solomon found a beautiful generalization of \Cref{eq:shephard_todd} that takes into account these Galois twists. Below we write $\lambda_1(g),\dots,\lambda_r(g)$ for the eigenvalues of $g\in G$ on $V$.

\begin{theorem}[{\cite{orlik1980unitary}}]
\label{thm:orlik_solomon}
Fix $G$ a reflection group and $\sigma \in \mathrm{Gal}(\mathbb{Q}(\zeta_G)/\mathbb{Q})$.  Then 
\[\sum_{g \in G} \left(\prod_{\lambda_i(g) \neq 1} \frac{1-\lambda_i(g)^\sigma}{1-\lambda_i(g)}\right) q^{\fix_V g} = \prod_{i=1}^r \left(q+e_i(V^\sigma) \right).\]
\end{theorem}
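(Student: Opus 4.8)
The plan is to mirror the strategy that was used to prove \Cref{thm:main_theorem}: compute a suitable Poincaré series in two different ways — once as a product, once as a trace sum over $G$ — and then specialize. Here the relevant object is the bigraded ring $\left(S(V^*)\otimes\bigwedge V^{*\sigma}\right)^G$, where $V^{*\sigma}$ denotes the Galois twist of the contragredient reflection representation. By Solomon's theorem (\Cref{thm:solomon}), applied with the exterior variables taken in the twisted representation, this ring is a free exterior algebra over $S(V^*)^G$ on generators that are $\mathbf{x}$-homogeneous of degrees $e_i(V^\sigma)$ — these fake-degree exponents of $V^\sigma$ are exactly what appear on the product side. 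This yields the product formula
\[
\sum_{\ell,p\geq 0}\dim_\CC\left(\left(S(V^*)_\ell\otimes{\textstyle\bigwedge^p}V^{*\sigma}\right)^G\right)x^\ell u^p=\prod_{i=1}^r\frac{1+ux^{e_i(V^\sigma)}}{1-x^{d_i}}.
\]

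Next I would compute the same series as a trace average over $G$. Since for $g\in G$ one has $\sum_{\ell\geq 0}\mathrm{tr}(g|S(V^*)_\ell)x^\ell=1/\det(1-xg|_{V^*})$ and $\sum_{p\geq 0}\mathrm{tr}(g|\bigwedge^p V^{*\sigma})u^p=\det(1+ug|_{V^{*\sigma}})$, averaging the projector $\frac1{|G|}\sum_{g\in G}g$ gives
\[
\prod_{i=1}^r\frac{1+ux^{e_i(V^\sigma)}}{1-x^{d_i}}=\frac{1}{|G|}\sum_{g\in G}\frac{\det\left(1+ug|_{V^{*\sigma}}\right)}{\det\left(1-xg|_{V^*}\right)}.
\]
The key arithmetic point is that the eigenvalues of $g$ on $V^\sigma$ are $\lambda_i(g)^\sigma$ (roots of unity, so $\sigma$ acts on them), while the eigenvalues on $V^{*\sigma}$ are $\lambda_i(g)^{-\sigma}=\overline{\lambda_i(g)^\sigma}$. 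Now specialize $u=-1$: the numerator becomes $\det(1-g|_{V^{*\sigma}})=\prod_i(1-\lambda_i(g)^{-\sigma})$, and after clearing the contribution of eigenvalues equal to $1$ (which vanish), one is left with $\prod_{\lambda_i(g)\neq 1}(1-\lambda_i(g)^{-\sigma})$. A standard manipulation — writing $1-\zeta^{-1}=-\zeta^{-1}(1-\zeta)$ and grouping the leading roots-of-unity factors, which sum to a nonzero constant independent of the twist by Galois-conjugation — converts this into $\prod_{\lambda_i(g)\neq 1}(1-\lambda_i(g)^\sigma)$ up to a unit that cancels against the analogous manipulation on the denominator. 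Meanwhile $\det(1-xg|_{V^*})$ at $x\to 1$ must be handled exactly as in \Cref{thm:OS-untwisted}: substitute $u=q(1-x)-1$ and take $x\to 1$, so that $\det(1+ug|_{V^{*\sigma}})/\det(1-xg|_{V^*})\to q^{\fix_V g}\prod_{\lambda_i(g)\neq 1}\frac{1-\lambda_i(g)^\sigma}{1-\lambda_i(g)}$ (using $\fix_{V^*}g=\fix_V g$), while the product side tends to $\prod_i(q+e_i(V^\sigma))$.

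The main obstacle I anticipate is not the product side (Solomon's theorem does the work) but the bookkeeping on the sum side: one has to verify carefully that the term-by-term limit $x\to1$ exists — i.e. that the $1/\det(1-xg|_{V^*})$ pole of order $\fix_V g$ is exactly matched by the zero of order $\fix_V g$ coming from the factors $(1-x)$ hidden in $u+1=q(1-x)$ appearing in the $\fix_{V^{*\sigma}}g=\fix_V g$ numerator factors — and that the surviving constant is precisely $\prod_{\lambda_i(g)\neq1}\frac{1-\lambda_i(g)^\sigma}{1-\lambda_i(g)}$ with the correct normalization (the roots-of-unity units must genuinely cancel, not merely agree up to sign). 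This is the same subtlety that forced the coset-by-coset analysis in \Cref{sec:cosetii,sec:coseti} for the main theorem, but here, because we twist by a global $\sigma$ rather than by a group element, no coset decomposition is needed — the delicate step is purely the limit computation, which I would carry out eigenvalue-by-eigenvalue after diagonalizing $g$.
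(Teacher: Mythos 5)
First, a point of calibration: the paper does not prove \Cref{thm:orlik_solomon} at all --- it is quoted from \cite{orlik1980unitary} and used as a black box in \Cref{sec:galois} --- so your proposal is being measured against the literature rather than against an argument in the text.

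Your overall architecture (Molien-type trace average, then the specialization $u=q(1-x)-1$ followed by $x\to1$) is the standard one, and your treatment of the sum side is essentially sound: since $\sigma$ fixes $1$, one has $\fix_{V^\sigma}g=\fix_V g$, so the order-$\fix_V g$ pole of $1/\det(1-xg)$ is exactly cancelled by the factors $1+u=q(1-x)$ in the numerator; and the stray roots-of-unity units you worry about disappear cleanly if you pair $\det(1+ug|_{V^\sigma})$ with $\det(1-xg|_{V})$ (equivalently, reindex the sum by $g\mapsto g^{-1}$) instead of juggling $1-\lambda^{-\sigma}$ against $1-\lambda^{-1}$. The genuine gap is on the product side. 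You assert that ``Solomon's theorem, applied with the exterior variables taken in the twisted representation,'' shows that $\bigl(S(V^*)\otimes\bigwedge(V^\sigma)^*\bigr)^G$ is a free exterior algebra over $S(V^*)^G$ on generators of degrees $e_i(V^\sigma)$. \Cref{thm:solomon} says nothing of the sort: it treats only $\bigwedge V^*$, and its proof (via the explicit generators $d\g_i$) does not transport to a Galois twist. For a general $G$-module $M$ the module $(S(V^*)\otimes M)^G$ is indeed free over $S(V^*)^G$ with generators in the fake degrees of $M^*$ --- that handles $p=1$ --- but the claim that the higher exterior powers are generated by wedges of the degree-one generators, i.e.\ that $(S(V^*)\otimes\bigwedge M)^G\simeq S(V^*)^G\otimes\bigwedge U_M$, is \emph{false} for arbitrary $M$, and for $M=(V^\sigma)^*$ it is a hard theorem: it is equivalent to the product formula $\frac{1}{|G|}\sum_{g\in G}\det(1+ug|_{V^\sigma})/\det(1-xg|_{V})=\prod_{i=1}^r(1+ux^{e_i(V^\sigma)})/(1-x^{d_i})$, which is precisely the content of the Orlik--Solomon result you are trying to prove. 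Their original argument required case-by-case verification for the exceptional groups, and the later uniform treatments (e.g.\ \cite{bonnafe2006twisted}) are substantial. As written, your proof assumes its hardest step and is therefore circular; to complete it you would need an independent proof of the twisted exterior-algebra freeness, not an appeal to \Cref{thm:solomon}.
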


\begin{definition}
Let $I^G_+\subset S(V^*)$ be the ideal generated by homogeneous $G$-invariant polynomials of positive degree, and let $\mathcal{C}_G$ be a $G$-stable homogeneous subspace of $S(V^*)$ such that $S(V^*)\simeq I^G_+\oplus \mathcal{C}_G$ as $G$-modules. For $\sigma\in\mathrm{Gal}(\zeta_G)$, define the \defn{Orlik-Solomon space} $U_G^\sigma:=(\mathcal{C}_G\otimes (\V^\sigma)^*)^G=\spn_\CC\{u_1^G,\dots,u_r^G\}$, where the $u_i^G$ are homogeneous with $\mathrm{deg}(u_i^G)=e_i^G(V^\sigma)$ and such that $\bigl(S(V^*)\otimes (V^\sigma)^*\bigr)^G\simeq S(V^*)^G\otimes U_G^\sigma$.
\label{def:osspace}
\end{definition}

 We can now state the general form of our main theorem.

\renewcommand{\thetheorem}{\ref{thm:main_theorem}}
\begin{theorem}
Let $N\triangleleft \ G$ be reflection groups acting by reflections on $V$, and let $\VN=V/N$. Let $\sigma\in\mathrm{Gal}(\mathbb{Q}(\zeta_G)/\mathbb{Q})$ and define the Orlik-Solomon space $U_N^\sigma$ as in \Cref{def:osspace}. Then
\[\sum_{g \in G}\left( \prod_{\lambda_i(g) \neq 1} \frac{1-\lambda_i(g)^\sigma}{1-\lambda_i(g)}\right) q^{\fix_V g} t^{\fix_{E} g} = \prod_{i=1}^r \left(qt+e_i^N(V^\sigma) t + e_i^G((U_N^\sigma)^*)\right).\]
\end{theorem}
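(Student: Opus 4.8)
The plan is to follow exactly the template of the untwisted proof in Section~\ref{sec:poincare}, replacing $\bigwedge\VNd$ by $\mathcal{C}_N\otimes(V^\sigma)^*$ throughout and carefully tracking the $\mathbf{x}$-grading on $N$-invariants. First I would set up the twisted bigraded Poincar\'e series
\[
\mathcal{P}^\sigma(x,y,u):=\sum_{\ell,m,p\ge0}\dim_\CC\bigl((S(V^*)_\ell\otimes({\textstyle\bigwedge^p}U_N^\sigma)_m)^G\bigr)x^\ell y^m u^p,
\]
where the homogeneous pieces of $U_N^\sigma$ are indexed by the $\mathbf{x}$-degrees of a homogeneous $G$-stable basis $u_1^N,\dots,u_r^N$ of $U_N^\sigma$ (such a basis exists because $U_N^\sigma\subset \mathcal{C}_N\otimes(V^\sigma)^*$ is $G$-stable and finite-dimensional, and it can be chosen $\mathbf{x}$-homogeneous since the whole construction is graded). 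The product side is the analogue of \Cref{lem:product_side}: by \Cref{thm:solomon} applied to $H$ acting on $\VN$ with the twist pushed down---equivalently, by the graded refinement in \Cref{rem:bigraded_solomon} with $\bigwedge\VNd$ replaced by $\bigwedge U_N^\sigma$ using that $U_N^\sigma$ consists of $N$-invariant-valued tensors---one gets $\mathcal{P}^\sigma(x,y,u)=\prod_i (1+x^{e_i^G((U_N^\sigma)^*)}y^{e_i^N(V^\sigma)}u)/(1-x^{d_i^G})$, and then the specialization $x\to1$, $y=x^t$, $u=qt(1-x)-1$ gives the product side of the theorem.

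For the sum side I would again take traces: $\mathcal{P}^\sigma(x,y,u)=\frac1{|G|}\sum_{g\in G}\det(1-xg|_V)^{-1}\prod_{m}\det(1+uy^m g|_{(U_N^\sigma)_m})$, exactly as in \Cref{thm:5.2}, where now the numerator uses that $U_N^\sigma\simeq\bigoplus_m (U_N^\sigma)_m$ decomposes $N$-equivariantly by $\mathbf{x}$-degree. The key point, as in \Cref{sec:cosetii,sec:coseti}, is the coset-by-coset computation. Fix $g\in G$ and choose the $u_i^N$ to be a $g$-eigenbasis of $U_N^\sigma$ with eigenvalues $\epsilon_i^g$ (possible since $U_N^\sigma$ is $G$-stable and $g$ has finite order); then the numerator factors uniformly over $n\in N$ as $\prod_i(1+\epsilon_i^g u y^{e_i^N(V^\sigma)})$, and the surviving content is $\frac1{|N|}\sum_{n\in N}\det(1-x(ng)|_{V^*})^{-1}$, which by the same symmetric-algebra-of-$\VNd$ argument used in \Cref{prop:twisted_graded} equals $\prod_i(1-\epsilon_i^g x^{d_i^N})^{-1}$. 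This yields $\mathcal{P}^\sigma_{gN}(x,y,u)=\prod_i (1+\epsilon_i^g u y^{e_i^N(V^\sigma)})/(1-\epsilon_i^g x^{d_i^N})$, the twisted analogue of \Cref{prop:twisted_graded}. To run the specialization coset-by-coset I would invoke the full (twisted) Orlik--Solomon result of \cite{bonnafe2006twisted}---the genuinely twisted version with the $\frac{1-\lambda_i^\sigma}{1-\lambda_i}$ weights, of which \Cref{thm:twisted_os} is the $\sigma=\mathrm{id}$ shadow---to evaluate $\mathcal{P}^\sigma_{gN}(x,x,u)$ and hence its limit under $u=q(1-x)-1$, $x\to1$, giving $\bigl(\prod_{\lambda_i(g)\neq1}\frac{1-\lambda_i(g)^\sigma}{1-\lambda_i(g)}\bigr)\sum_{n\in N}q^{\fix_V(ng)}$. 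Comparing $\mathcal{P}^\sigma_{gN}(x,y,u)$ (with $y=x^t$) against $\mathcal{P}^\sigma_{gN}(x,x,u)$ and taking the limit produces the coset identity $\lim_{x\to1}\mathcal{P}^\sigma_{gN}(x,x^t,qt(1-x)-1)=t^{\fix_E g}\bigl(\prod_{\lambda_i(g)\neq1}\frac{1-\lambda_i(g)^\sigma}{1-\lambda_i(g)}\bigr)\sum_{n\in N}q^{\fix_V(ng)}$, the twisted analogue of \Cref{cor:twisted_graded}. Summing over coset representatives and equating with the product-side specialization finishes the proof.

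I expect the main obstacle to be the coset-by-coset specialization step, i.e.\ the twisted analogue of \Cref{cor:twisted_graded}: one must check that the weight factor $\prod_{\lambda_i(g)\neq1}\frac{1-\lambda_i(g)^\sigma}{1-\lambda_i(g)}$ emerging from the twisted Orlik--Solomon formula for $N$ interacts correctly with the extra $y$-grading, and that the $t^{\fix_E g}$ power separates cleanly in the limit even though $(ng)|_E$ generically has larger fixed space than $(ng)|_V$. This requires comparing the two series $\mathcal{P}^\sigma_{gN}(x,x^t,u)$ and $\mathcal{P}^\sigma_{gN}(x,x,u)$, noting that the $\epsilon_i^g$ equal to $1$ contribute the $t$-powers while the others contribute the weighted $q$-sum via the Bonnaf\'e formula, exactly as in the untwisted case; the only genuinely new bookkeeping is verifying that the degrees $e_i^N(V^\sigma)$ and $e_i^G((U_N^\sigma)^*)$ are the ones appearing, which is guaranteed by \Cref{def:osspace} and the graded structure of $U_N^\sigma$. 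A secondary point to be careful about is that $U_N^\sigma$, unlike $\bigwedge^1\VNd$, need not literally sit inside $S(V^*)$; but since $U_N^\sigma\subset \mathcal{C}_N\otimes(V^\sigma)^*$ and $\mathcal{C}_N$ can be taken inside the $N$-harmonics, the relevant objects $(S(V^*)\otimes\bigwedge U_N^\sigma)^G$ still admit the $N$-then-$H$ factorization of \Cref{rem:bigraded_solomon}, so the product formula goes through.
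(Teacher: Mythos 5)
Your proposal follows the same route the paper intends for the twisted statement: rerun the Section~\ref{sec:poincare} argument with $U_N^\sigma$ in place of $\VNd$ (product side via \Cref{rem:bigraded_solomon}, sum side via traces, coset-by-coset specialization via the twisted Bonnaf\'e--Lehrer--Michel identity), and the notation $\ENs$, $\UNs$ already appearing in that section is set up for exactly this substitution. One transcription error to fix: in your twisted analogue of \Cref{cor:twisted_graded} the Orlik--Solomon weight must remain inside the sum over $N$ and be evaluated at $ng$, i.e.\ the limit is $t^{\fix_{\VN} g}\sum_{n\in N}\bigl(\prod_{\lambda_i(ng)\neq 1}\tfrac{1-\lambda_i(ng)^\sigma}{1-\lambda_i(ng)}\bigr)q^{\fix_V(ng)}$; the weight depends on $ng$ and is not constant on the coset, so it cannot be pulled out as a function of the representative $g$ the way you wrote it. With that correction the coset sums reassemble to the theorem's left-hand side and the argument is complete.
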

\addtocounter{theorem}{-1}

\begin{remark}
The Orlik-Solomon space $U_N^\sigma$ playes the role of $\VN^*$ in this generalization of~\Cref{thm:main_theorem}. When $\sigma=1$, a straightforward argument yields a graded $G$-module homomorphism (of degree $-1$) $\VNd\simeq U_N$. This is why we shifted the obvious $\mathbf{x}$-grading of $\VN^*$ by $-1$ in \Cref{sec:poincare}. However, it is not true in general that $(\VN^\sigma)^*\simeq U_N^\sigma$ when $\sigma\neq 1$.
\end{remark}

\providecommand{\bysame}{\leavevmode\hbox to3em{\hrulefill}\thinspace}
\providecommand{\MR}{\relax\ifhmode\unskip\space\fi MR }
\providecommand{\MRhref}[2]{%
  \href{http://www.ams.org/mathscinet-getitem?mr=#1}{#2}
}
\providecommand{\href}[2]{#2}

\end{document}